\newcommand{\smallqed}{{\tiny ($\Box$)}}
\newcommand{\vertex}{\node[vertex]}
\tikzstyle{vertex}=[circle, draw, inner sep=0pt, minimum size=6pt]
\newtheorem{theorem}{Theorem}
\newtheorem{prop}{Proposition}
\newtheorem{conjecture}{Conjecture}
\newtheorem{observation}{Observation}
\begin{document}

\title{Claw-free cubic graphs are $(1, 1, 2, 2)$-colorable}
\author{
Bo\v{s}tjan Bre\v{s}ar$^{a,b}$
\and
Kirsti Kuenzel$^{c}$
\and
Douglas F. Rall$^{d}$\\
}

\date{}

\maketitle

\begin{center}
$^a$ Faculty of Natural Sciences and Mathematics, University of Maribor, Slovenia\\

$^b$ Institute of Mathematics, Physics and Mechanics, Ljubljana, Slovenia\\
$^c$ Department of Mathematics, Trinity College, Hartford, CT, USA\\
$^d$ Department of Mathematics, Furman University, Greenville, SC, USA\\
\end{center}
\vskip15mm

\begin{abstract}
A $(1,1,2,2)$-coloring of a graph is a partition of its vertex set into four sets two of which are independent and the other two are $2$-packings. In this paper, we prove that every claw-free cubic graph admits a $(1,1,2,2)$-coloring. This implies that the conjecture from [Packing chromatic number, $(1,1,2,2)$-colorings, and characterizing the Petersen graph,  Aequationes Math.\ 91 (2017) 169--184] that the packing chromatic number of subdivisions of subcubic graphs is at most $5$ is true in the case of claw-free cubic graphs.

\end{abstract}
{\small \textbf{Keywords:}} packing coloring, cubic graphs, claw-free graphs, Petersen graph\\
\noindent {\small \textbf{AMS subject classification:} } 05C15, 05C12

\section{Introduction} \label{sec:intro}
%%%%%%%%%%%%%%%%%%%%%%%%%%%%%%%%%%%

Given a non-decreasing sequence $S=(a_1,a_2,\ldots,a_r)$ of positive integers, a function $f:V(G)\rightarrow [r]$ is an {\em $S$-packing coloring} of $G$ if for every two vertices $u,v\in V(G)$ with $f(u)=i=f(v)$, the distance between $u$ and $v$ is greater than $a_i$. The concept was first mentioned in~\cite{goddard-2008}, where the main focus was on the so-called packing coloring, which is considered when $S=(1,2,\ldots,r)$, while in~\cite{goddard-2012} more extensive  investigations of $S$-packing colorings were initiated. The {\em packing chromatic number} of a graph is the smallest positive integer $r$ for which the graph admits a $(1,2,\ldots,r)$-packing coloring.
The concepts were studied in a number of papers; see the survey~\cite{BFKR} and the references therein. Note that if $S$ consists only of $1$s, we get the standard (vertex) coloring, while if $S$ is the constant sequence with all integers $d$, we get the $d$-distance coloring of graphs (the latter colorings were surveyed in~\cite{kramer}).  We remark that packing colorings combine colorings and packings, where a {\em $k$-packing} is a set of vertices every pair of which is at distance greater than $k$.

In the studies of $S$-packing colorings, a lot of attention has been given to the class of graphs with maximum degree $3$ (i.e., {\em subcubic graphs}),
and their subclass of cubic graphs. Gastineau and Togni found several sequences $S$ with integers in $\{1,2\}$ for which subcubic graphs are $S$-packing colorable~\cite{gt-2016}. In addition, they posed a question whether every subcubic graph except for the Petersen graph is $(1,1,2,3)$-packing colorable. This question and its variants were  considered in several papers.  Kostochka and Liu proved that every outerplanar subcubic graph is $(1,1,2,4)$-packing colorable~\cite{Kostochka-2021}.
Bre\v sar, Gastineau and Togni proved that every triangle-free, outerplanar, subcubic graph is $(1,2,2,2)$-packing colorable~\cite{bgt}, while $(1,2,2,2)$-packing colorability was also established for all $2$-connected, outerplanar, subcubic graphs by Yang and Wu~\cite{YW}. Recently, Tarhini and Togni proved that cubic Halin graphs are $(1,1,2,3)$-packing colorable~\cite{tt-2024}.

Bre\v sar, Klav\v zar, Rall, and Wash~\cite{bkrw-2017b} studied packing colorings of the subdivisions of subcubic graphs.  Let $S(G)$ stand for the subdivision of a graph $G$, which is the graph obtained from $G$ by subdividing every edge by one vertex. Based on a question of Gastineau and Togni~\cite{gt-2016}, the following conjecture was posed in~\cite{bkrw-2017b}:
\begin{conjecture}
\label{conj1}
If $G$ is a subcubic graph, then $S(G)$ admits a $(1,2,3,4,5)$-packing coloring.
\end{conjecture}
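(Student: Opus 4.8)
The plan is to convert Conjecture~\ref{conj1} into a vertex-partition problem on $G$ itself, to solve that problem for as wide a class of subcubic graphs as possible, and to treat the graphs where the partition fails by a direct construction on the subdivision.

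\emph{The reduction.} The first step is the sufficient condition that drives the whole approach: \emph{if $G$ admits a $(1,1,2,2)$-coloring, then $S(G)$ admits a $(1,2,3,4,5)$-packing coloring.} Write $V(S(G))=V(G)\cup W$, where $W$ is the set of subdivision vertices. Two observations make this immediate. First, $S(G)$ is bipartite with parts $V(G)$ and $W$, since every edge of $S(G)$ joins an original vertex to a subdivision vertex; in particular $W$ is independent. Second, for original vertices $u,v$ one has $d_{S(G)}(u,v)=2\,d_G(u,v)$. Now take a $(1,1,2,2)$-coloring of $G$ with independent sets $A_1,A_2$ and $2$-packings $B_1,B_2$, and color $S(G)$ by giving all of $W$ the color $1$, all of $A_1$ the color $2$, all of $A_2$ the color $3$, all of $B_1$ the color $4$, and all of $B_2$ the color $5$. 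Color $1$ is valid because $W$ is independent; vertices of $A_i$ lie at $G$-distance at least $2$, hence at $S(G)$-distance at least $4$, which exceeds both $2$ and $3$; vertices of $B_i$ lie at $G$-distance at least $3$, hence at $S(G)$-distance at least $6>5$. Thus the conjecture holds for every $G$ that is $(1,1,2,2)$-colorable, and the task reduces to proving $(1,1,2,2)$-colorability of subcubic graphs.

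\emph{Attacking $(1,1,2,2)$-colorability.} Next I would reduce to the cubic case and then enlarge the class of cubic graphs known to be $(1,1,2,2)$-colorable. Vertices of degree at most $2$ are the least constrained, so one would try to delete them, color the remainder inductively, and reinsert them; the subtlety is that the $2$-packing condition is nonlocal (it forbids pairs at distance $2$ as well as pairs at distance $1$), so even this step needs a careful local analysis rather than pure greediness. Having reduced to connected cubic graphs, the main result of this paper already settles the claw-free case. To go further one would have to confront claws directly: locate a claw, perform a structural reduction on a controlled neighborhood, color the smaller graph, and rebuild the coloring across the claw while repairing any violated distance-$2$ constraints.

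\emph{The exceptions and the main obstacle.} Not every subcubic graph is $(1,1,2,2)$-colorable---the Petersen graph is the paradigmatic failure---so for such graphs the reduction is unavailable and one must color $S(G)$ directly. Here the crucial extra freedom is that the reduction used only a very restricted solution (all of $W$ in color $1$, all original vertices in colors $2$--$5$); in general one may also place subdivision vertices in colors $2,3,4,5$ and original vertices in color $1$, and this larger search space is exactly what lets a graph like the Petersen graph succeed on its subdivision while failing the $(1,1,2,2)$ test. I expect the decisive difficulty to be the passage from claw-free to general subcubic graphs: the structural engine behind the claw-free theorem---the local case analysis around triangles and the resulting partition into independent sets and $2$-packings---loses its grip once claws are permitted, so a genuinely new idea (a discharging argument carried out on $S(G)$ itself, or a sharper structural decomposition of cubic graphs) will be needed to cover all subcubic graphs, together with an explicit finite list of exceptional graphs handled by hand.
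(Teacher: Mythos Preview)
The statement you were asked to prove is Conjecture~\ref{conj1}, and neither you nor the paper proves it: it remains open. The paper establishes only the special case of claw-free cubic graphs (via Theorem~\ref{thm:main} together with the Gastineau--Togni reduction you rederive in your first paragraph), and it says so explicitly in the introduction and in the closing sentence.

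Your reduction step is correct and is exactly the lemma from~\cite{gt-2016} that the paper cites: if $G$ is $(1,1,2,2)$-colorable then $S(G)$ is $(1,2,3,4,5)$-packing colorable, by the bipartition $V(G)\cup W$ and the identity $d_{S(G)}(u,v)=2\,d_G(u,v)$ for $u,v\in V(G)$. But from that point on your text is a research outline, not a proof. You yourself say that the passage from claw-free to general subcubic graphs ``will need a genuinely new idea,'' and you note correctly that the Petersen graph is not $(1,1,2,2)$-colorable, so the reduction alone cannot settle the conjecture. The sketch of ``delete low-degree vertices and reinsert'' and ``reduce around a claw and repair distance-$2$ constraints'' names the difficulties without resolving them; no concrete inductive invariant, discharging scheme, or decomposition is supplied that would actually carry either step through.

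In short: your first paragraph reproduces a known and valid implication, and the remainder is an honest assessment that the conjecture is open---which matches the paper's position. There is no proof here to compare, because the paper offers none for the full statement either.
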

Equivalently, the conjecture states that every subdivision of a subcubic graph has the packing chromatic number at most $5$. Balogh, Kostochka, and Liu proved in~\cite{bkl} that the packing chromatic number of the subdivisions of subcubic graphs is at most $8$. Another approach to the conjecture has been to confirm its correctness in some subclasses of subcubic graphs.
By a result of Gastineau and Togni from~\cite{gt-2016}, if a graph admits a $(1,1,2,2)$-packing coloring, then its subdivision admits a $(1,2,3,4,5)$-packing coloring, which gives an additional motivation for studying $(1,1,2,2)$-packing colorings in subcubic graphs. In~\cite{bkrw-2017b} it was proved that generalized prisms of a cycle (with the exception of the Petersen graph) admit a $(1,1,2,2)$-packing coloring, hence Conjecture~\ref{conj1} holds for this kind of cubic graph.  Liu et al.~\cite{liu-2020} established that every subcubic graph with maximum average degree at most $30/11$ is $(1,1,2,2)$-packing colorable.
Recently, Mortada and Togni~\cite{mt-2024} proved that a subcubic graph $G$ is $(1,1,2,2)$-packing colorable if there are no two adjacent degree-$3$ vertices all of whose neighbors also have have degree $3$.

In this paper, we utilize the knowledge about the structure of cubic claw-free graphs that can be found in the literature. (Recall that the {\em claw} is the star $K_{1,3}$, and {\em claw-free graphs} are those with no claw as an induced subgraph.)  We establish that every cubic claw-free graph is $(1,1,2,2)$-packing colorable. This, in particular, implies the truth of Conjecture~\ref{conj1} in the case of cubic claw-free graphs.

%%%%%%%%%%%%%%%%%%%%%%%%%%%%%%%%%%%
%%%%%%%%%%%%%%%%%%%%%%%%%%%%%%%%%%%

\section{Main Result}
A {\it diamond} is a graph $D$ obtained from the complete graph $K_4$ by deleting an edge.
Given a diamond $D$, we refer to the two vertices of degree $3$ in $D$ as the \emph{internal vertices} of $D$,  and the remaining two vertices as the \emph{external vertices} of $D$.

For a positive integer $k$, a {\it string of $k$ diamonds} is the graph obtained from the disjoint union of $k$ diamonds $D_1, \dots, D_k$ and two additional isolated vertices $x_0$ and $y_{k+1}$, where $V(D_i)=\{x_i,y_i,z_i,w_i\}$ for all $i\in [k]$, and $z_i$ and $w_i$ are internal vertices in $D_i$, by adding the edges $x_{i-1}y_i$ for all $i\in [k+1]$.
Furthermore, a {\it ring of diamonds} is defined to be any connected, claw-free cubic graph in which every vertex is in a diamond. Note that a ring of diamonds can be obtained from a string of diamonds by removing the vertices $x_0$ and $y_{k+1}$ and joining $y_1$ and $x_k$ by an edge.

Let $G$ be a graph. By {\em replacing an edge with a string of $k$ diamonds} we mean the operation by which an edge $xy\in E(G)$ is removed from the graph, and the vertex $x_0$ from a string of $k$ diamonds $D$ is identified with $x$, while $y_{k+1}$ from $D$ is identified with $y$.

The following result of Oum~\cite{Oum} will be instrumental in our proofs.

\begin{theorem}{\rm \cite{Oum}}\label{thm:2factor} A graph $G$ is $2$-edge-connected, claw-free cubic if and only if either
\begin{enumerate}
\item[(i)] $G \cong K_4$,
\item[(ii)] $G$ is a ring of diamonds, or
\item[(iii)] $G$ can be built from a $2$-edge-connected, cubic multigraph $H$ by replacing some edges of $H$ with strings of diamonds and replacing each vertex of $H$ with a triangle.
\end{enumerate}
\end{theorem}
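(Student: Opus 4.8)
The plan is to prove both directions of the equivalence; the reverse direction --- that each of (i)--(iii) yields a $2$-edge-connected claw-free cubic graph --- is a routine verification, so the bulk of the work is the forward direction, where I would extract the diamond/triangle structure of an arbitrary $2$-edge-connected claw-free cubic graph.

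For the reverse direction I would argue as follows. Case (i) is immediate. For (ii), a ring of diamonds is connected, claw-free and cubic by definition, and it is $2$-edge-connected because it decomposes as a cyclic sequence of diamonds joined by single edges: deleting a joining edge leaves a connected ``path of diamonds'', while deleting an edge inside a diamond leaves that diamond connected and still attached to the rest through both of its external vertices. For (iii), the two operations --- replacing a vertex by a triangle, and replacing an edge by a string of diamonds --- are degree-preserving, so $G$ is cubic; each newly introduced vertex lies on a triangle with two of its neighbours, and a short check at the gluing points rules out an induced $K_{1,3}$, so $G$ is claw-free; and since a string of diamonds is internally $2$-edge-connected and is attached at both of its identified ends, neither operation can create a bridge, so $G$ inherits $2$-edge-connectivity from $H$.

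For the forward direction, let $G$ be $2$-edge-connected, claw-free and cubic, hence connected, and observe that for each vertex $v$ the set $N(v)$ spans at least one edge (otherwise there is a claw at $v$), so every vertex lies on a triangle. If some $N(v)$ spans a triangle then $\{v\}\cup N(v)$ induces $K_4$, which is all of $G$: case (i). Otherwise I would run a local case analysis on whether $N(v)$ spans one or two edges and show that each vertex belongs either to a \emph{diamond} --- arising when some edge at $v$ lies on two triangles, in which case the two internal vertices of the diamond are \emph{saturated} (all their neighbours lie inside the diamond) and each external vertex sends exactly one edge out --- or to a \emph{lonely triangle}, a triangle none of whose edges lies on a second triangle, whose three vertices send out edges to three distinct outside vertices. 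The heart of the argument is the claim that these blocks partition $V(G)$: cubicity forbids a vertex from lying in two lonely triangles or in two diamonds, and the unique triangle through an external vertex of a diamond is already a diamond-triangle, so it cannot also be lonely. I expect this block-decomposition step --- making the above airtight against the various small overlapping configurations --- to be the main obstacle; it is where claw-freeness and $3$-regularity do the real work.

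Finally I would assemble the global picture. Since internal vertices of diamonds are saturated, every edge leaving a diamond joins two external vertices or an external vertex to a lonely triangle, so the diamonds link up into maximal strings of diamonds. If $G$ has no lonely triangle, then every vertex lies in a diamond and $G$ is, by definition, a ring of diamonds: case (ii). Otherwise, contracting each lonely triangle to a single vertex and each maximal string of diamonds to a single edge yields a multigraph $H$ that is cubic (each lonely triangle had three outside edges) and $2$-edge-connected (this property passes from $G$ to $H$ under the contractions), and $G$ is recovered from $H$ by replacing each vertex with a triangle and each contracted edge with its string of diamonds: case (iii). A minor but necessary point is that $H$ must be allowed loops and parallel edges --- a string of diamonds can reattach to the same triangle --- which is exactly why the statement speaks of a $2$-edge-connected cubic \emph{multigraph}.
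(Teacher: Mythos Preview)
The paper does not give its own proof of this theorem; it is stated with the citation \cite{Oum} and used as a black box in the proofs of Theorem~\ref{thm:2connected} and Theorem~\ref{thm:main}. So there is nothing in the paper to compare your proposal against.

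That said, your sketch is the standard structure argument and is essentially correct. One small slip: you write that $H$ must be allowed loops, but a $2$-edge-connected cubic multigraph on more than one vertex cannot have a loop, since a loop contributes $2$ to the degree of its vertex, leaving exactly one further incident edge, which is then a bridge. (Correspondingly, in $G$ a string of diamonds cannot have both ends attached to the same lonely triangle, for the third outgoing edge of that triangle would be a bridge of $G$.) Parallel edges, on the other hand, are genuinely needed --- as Fig.~\ref{fig:C_i} and Fig.~\ref{fig:coloring} in the paper illustrate.
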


When considering a $2$-edge-connected, claw-free cubic graph $G$, which satisfies (iii) in Theorem~\ref{thm:2factor}, we will often speak about the {\em underlying cubic multigraph} $H$ from which $G$ can be built by replacing each vertex of $H$ with a triangle and (possibly) some edges of $H$ with strings of diamonds. We also recall that Petersen \cite{Petersen} showed that $H$ contains a $2$-factor:

\begin{theorem}{\rm \cite{Petersen}}\label{thm:Petersen} Every bridgeless, cubic multigraph contains a $2$-factor.
\end{theorem}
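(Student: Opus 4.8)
The plan is to deduce the existence of a $2$-factor from the existence of a perfect matching. If $H$ is cubic and $M\subseteq E(H)$ is a perfect matching, then the spanning subgraph $H-M$ obtained by deleting the edges of $M$ is $2$-regular; a $2$-regular (multi)graph is a disjoint union of cycles, which is exactly a $2$-factor. So everything reduces to the classical statement that every bridgeless cubic multigraph has a perfect matching.

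For the perfect matching I would invoke Tutte's $1$-factor theorem: $H$ has a perfect matching if and only if $o(H-S)\le |S|$ for every $S\subseteq V(H)$, where $o(\cdot)$ counts the components of odd order. Suppose some $S$ violates this. Since $H$ is cubic, the handshake lemma forces $|V(H)|$ to be even, and therefore $o(H-S)\equiv |S|\pmod 2$; hence a violation actually gives $o(H-S)\ge |S|+2$. Now count the edges of $H$ with one end in $S$ and the other in an odd component of $H-S$. From the side of $S$ this number is at most $\sum_{v\in S}\deg_H(v)=3|S|$. From the side of the components, each odd component $C$ satisfies $\sum_{v\in C}\deg_H(v)=3|C|$, an odd number, so the number of edges leaving $C$ is odd; since $H$ is bridgeless this number is not $1$, hence it is at least $3$. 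Summing over the $o(H-S)$ odd components yields at least $3\,o(H-S)\ge 3(|S|+2)$ such edges. Combining the two estimates gives $3(|S|+2)\le 3|S|$, a contradiction. Thus Tutte's condition holds, $H$ has a perfect matching, and removing it leaves the desired $2$-factor.

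The crux of the argument is the parity bookkeeping: one needs both upgrades — that evenness of $|V(H)|$ pushes a Tutte violation from $o(H-S)\ge|S|+1$ to $o(H-S)\ge|S|+2$, and that bridgelessness pushes ``an odd number of edges leaves $C$'' from $\ge 1$ to $\ge 3$ — since without the latter the inequality $3\,o(H-S)\le 3|S|$ would not be contradicted. A minor point is the treatment of loops if the multigraph model allows them, but a loop contributes $2$ to a degree and can never be a bridge, so it does not affect the count (alternatively one first reduces to the loopless case). A different proof via proper $3$-edge-colorings of the bipartite double cover together with Hall's theorem is also available, but the Tutte-theorem route above is the shortest and is in essence Petersen's original reasoning.
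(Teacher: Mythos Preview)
The paper does not prove Theorem~\ref{thm:Petersen}; it is quoted as a classical result of Petersen, cited to~\cite{Petersen}, and used as a black box. So there is nothing in the paper to compare your argument against.

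That said, your proof is correct. The reduction to a perfect matching is immediate, and your verification of Tutte's condition is the standard one: the two upgrades you highlight --- that $|V(H)|$ even forces a Tutte violation up to $o(H-S)\ge|S|+2$, and that bridgelessness forces each odd component to send at least $3$ edges to $S$ --- are precisely what makes the double count $3(|S|+2)\le 3|S|$ collapse. The only step worth spelling out a bit more carefully is that all edges leaving an odd component of $H-S$ must land in $S$ (there are no edges between distinct components of $H-S$), so the lower bound $3\,o(H-S)$ really is a lower bound on the $S$-to-odd-component edge count; you use this implicitly.

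One minor historical correction: this is not ``in essence Petersen's original reasoning.'' Petersen's 1891 argument predates Tutte's theorem by over fifty years and proceeds differently; the Tutte-based proof you give is the clean modern route. This has no bearing on correctness.
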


Note that if $G$ is a $2$-edge-connected, claw-free graph built from a corresponding cubic multigraph $H$, we can build a $2$-factor for $G$ by taking any $2$-factor from the corresponding cubic multigraph $H$ by replacing each vertex with a triangle and some edges with strings of diamonds. In particular, if an edge of $H$ does not belong to a $2$-factor of $H$, yet it is replaced in $G$ by a string of diamonds, then in each of the diamonds the corresponding spanning $4$-cycles can be added to the $2$-factor of $G$.

In the rest of the paper, we will omit the word ``packing'' in ``packing coloring'', and write shortly ``$(1,1,2,2)$-coloring.'' In addition, when proving that a graph has a $(1,1,2,2)$-coloring we will assign colors from the set $\{1_a,1_b,2_a,2_b\}$, instead of defining a coloring function.  Any pair of vertices that are both assigned the color $1_a$ or both assigned $1_b$ must be nonadjacent.  The distance in the graph between any two vertices that are both assigned $2_a$ or both assigned $2_b$ must be at least $3$.

\begin{prop}\label{prop:diamonds} If $G$ is a ring of diamonds, then $G$ is $(1, 1, 2, 2)$-colorable.
\end{prop}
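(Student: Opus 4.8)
The plan is to work directly from the explicit description of a ring of diamonds accompanying its definition: up to isomorphism, $G$ is obtained from a string of $k$ diamonds $D_1,\dots,D_k$, where $V(D_i)=\{x_i,y_i,z_i,w_i\}$ and $z_i,w_i$ are the internal vertices of $D_i$, by deleting $x_0$ and $y_{k+1}$ and adding the edge $y_1x_k$. Reading indices cyclically so that $y_{k+1}=y_1$ and $x_0=x_k$, the edge set of $G$ then consists of the five diamond edges $z_iw_i,\ z_ix_i,\ z_iy_i,\ w_ix_i,\ w_iy_i$ inside each $D_i$, together with the linking edges $x_iy_{i+1}$ for $i\in[k]$. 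The first thing I would record is that $N(z_i)=\{w_i,x_i,y_i\}$ and $N(w_i)=\{z_i,x_i,y_i\}$, both contained in $V(D_i)$, whereas $N(x_i)=\{z_i,w_i,y_{i+1}\}$ and $N(y_i)=\{z_i,w_i,x_{i-1}\}$.

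Next I would define a uniform colouring of $G$: assign colour $1_a$ to every $x_i$, colour $1_b$ to every $y_i$, colour $2_a$ to every $z_i$, and colour $2_b$ to every $w_i$, so that each colour appears exactly once in each diamond. To check that this is a $(1,1,2,2)$-coloring I would verify four points. The set $\{x_i:i\in[k]\}$ is independent because no neighbour of $x_i$ is of the form $x_j$, and by symmetry $\{y_i:i\in[k]\}$ is independent as well. For the two $2$-packing classes, if $i\neq j$ then $z_i$ and $z_j$ are non-adjacent and have no common neighbour, since $N(z_i)\subseteq V(D_i)$, $N(z_j)\subseteq V(D_j)$, and the diamonds $D_i$ and $D_j$ are vertex-disjoint; hence $d_G(z_i,z_j)\ge 3$, so $\{z_i:i\in[k]\}$ is a $2$-packing, and the same argument applies to $\{w_i:i\in[k]\}$.

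I do not expect a genuine obstacle here. Essentially all of the content is in setting up the labelling from the string-of-diamonds description correctly and in the one-line observation that every neighbour of an internal vertex of a diamond lies in that same diamond, which immediately forces internal vertices of distinct diamonds to be at distance at least $3$. The only point that warrants a moment's care is the behaviour of the smallest ring, $k=2$, where two consecutive diamonds are joined by both linking edges $x_1y_2$ and $x_2y_1$; but the neighbourhood computation above — and hence each of the four verifications — is unchanged, so the argument goes through uniformly for all $k\ge 2$. Neither Theorem~\ref{thm:2factor} nor Theorem~\ref{thm:Petersen} is needed for this base case.
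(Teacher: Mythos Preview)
Your proof is correct and follows essentially the same approach as the paper's: internal vertices of each diamond receive the two distance-$2$ colours $2_a,2_b$, while the external vertices receive the colours $1_a,1_b$ so that adjacent external vertices of consecutive diamonds get different colours. The paper states this in a terser, less coordinatised form, but your explicit labelling and neighbourhood check amount to the same argument.
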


\begin{proof}
One can obtain the corresponding coloring by dividing the vertex set of $G$ into quadruples of vertices each of which induce a diamond. Now, the two vertices of a quadruple $Q_i$ all of whose neighbors are in $Q_i$ receive colors $2_a$ and $2_b$, while if a vertex $x$ of $Q_i$ is adjacent to a vertex $y$ of $Q_j$, where $i\ne j$, then $x$ and $y$ get distinct colors from $\{1_a,1_b\}$.
\end{proof}

We also utilize the following result due to Plesn\'{i}k.

\begin{theorem}{\rm \cite{Plesnik}}\label{thm:Plesnik} Let $G$ be an $(r-1)$-edge-connected, regular graph of degree $r>0$ where $|V(G)|$ is even and let $H$ be an arbitrary set of $r-1$ edges. The graph $G' = G - H$ has a $1$-factor.
\end{theorem}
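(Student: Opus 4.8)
The plan is to argue by contradiction using Tutte's 1-factor theorem, playing the $(r-1)$-edge-connectivity of $G$ against the bound $|H|\le r-1$ and using a parity observation to extract just enough slack to close the argument.

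First I would suppose $G'=G-H$ has no 1-factor. Tutte's theorem then gives a set $S\subseteq V(G)$ with $o(G'-S)>|S|$, where $o(\cdot)$ denotes the number of odd-order components. Since $|V(G)|$ is even, $o(G'-S)\equiv|S|\pmod2$, so in fact $o(G'-S)\ge|S|+2$; let $C_1,\dots,C_k$ be the odd components of $G'-S$, with $k\ge|S|+2\ge2$.

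The core of the proof is a lower bound on the number of edges of $G$ leaving each $C_i$. Let $\partial(C_i)$ be the set of edges of $G$ with exactly one endpoint in $C_i$. Since $k\ge2$ we have $\emptyset\ne C_i\ne V(G)$, so $\partial(C_i)$ is an edge cut of $G$ and thus $|\partial(C_i)|\ge r-1$. Summing degrees of $G$ over $C_i$ gives $r|C_i|=2|E(G[C_i])|+|\partial(C_i)|$, and as $|C_i|$ is odd this forces $|\partial(C_i)|\equiv r\pmod2$; since $r-1\not\equiv r\pmod2$ we obtain the improved bound $|\partial(C_i)|\ge r$, whence $\sum_{i=1}^{k}|\partial(C_i)|\ge kr\ge(|S|+2)r$. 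For the matching upper bound I would sort each edge counted in $\sum_i|\partial(C_i)|$ by its second endpoint: edges with the other end in $S$ number at most $\sum_{v\in S}\deg_G(v)=r|S|$ altogether, while an edge with both ends outside $S$ must join two distinct components of $G'-S$ (because each $C_i$ is a component of $G'-S$) and hence lies in $H$, where it is counted at most twice in the sum; therefore $\sum_i|\partial(C_i)|\le r|S|+2|H|\le r|S|+2(r-1)$. Comparing the two estimates gives $(|S|+2)r\le r|S|+2(r-1)$, i.e., $2r\le2r-2$, a contradiction, so $G'$ has a 1-factor.

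The step I expect to be the crux is the parity improvement $|\partial(C_i)|\ge r$: with only the connectivity bound $|\partial(C_i)|\ge r-1$ the comparison becomes $(|S|+2)(r-1)\le r|S|+2(r-1)$, which is valid and yields nothing, so it is essential that every odd component of $G'-S$ sheds one extra edge beyond what edge-connectivity alone guarantees. The rest is routine bookkeeping — verifying that $\partial(C_i)$ is genuinely a cut even when $S=\emptyset$ (it is, since $k\ge2$ forces each $C_i$ to be a proper nonempty subset) and that no $H$-edge is overcounted.
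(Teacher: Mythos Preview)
The paper does not supply its own proof of this theorem; it is quoted from Plesn\'{i}k~\cite{Plesnik} and used as a black box, so there is nothing in the paper to compare your argument against. That said, your proof is correct and is essentially the classical argument: assume a Tutte set $S$ exists, use the parity of $|V(G)|$ to upgrade $o(G'-S)>|S|$ to $o(G'-S)\ge|S|+2$, use regularity plus the oddness of each $C_i$ to push the edge-connectivity bound $|\partial(C_i)|\ge r-1$ up to $|\partial(C_i)|\ge r$, and then compare $\sum_i|\partial(C_i)|\ge(|S|+2)r$ with the upper bound $r|S|+2|H|$ obtained by routing each boundary edge either into $S$ or into $H$. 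Your bookkeeping is sound, including the observation that $k\ge2$ guarantees each $C_i$ is a proper nonempty subset so that $\partial(C_i)$ is a genuine edge cut, and the justification that any $G$-edge between two components of $G'-S$ must lie in $H$ and is counted at most twice. This is exactly the line of reasoning in Plesn\'{i}k's original paper, so had the authors included a proof it would almost certainly have matched yours.
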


As mentioned in~\cite{Plesnik}, Theorem~\ref{thm:Plesnik} holds also in the case of multigraphs. Consider the specific case when $G$ is a $2$-edge-connected, cubic multigraph where $|V (G)|$ is even.  Let $e$ be any edge of $G$ and $f \in E(G)- \{e\}$. By Theorem~\ref{thm:Plesnik}, $G- \{e, f\}$ has a $1$-factor, say $F$. Note that $G -F$ is $2$-regular (and so is a disjoint union of cycles), and $\{e, f\} \subseteq E(G -F)$. Therefore, $e$ belongs to one of these cycles. We state this result, a corollary of Theorem~\ref{thm:Plesnik}, which we will use repeatedly in the remainder of the paper.

\begin{theorem}\label{thm:multigaph-1factor} Let $G$ be a $2$-edge-connected, cubic multigraph where $|V(G)|$ is even. For an arbitrary edge $e \in E(G)$, $G$ has a $2$-factor that contains $e$.
\end{theorem}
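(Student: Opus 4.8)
The plan is to obtain this statement as an immediate corollary of Theorem~\ref{thm:Plesnik} in its multigraph form (which, as noted just after that theorem, remains valid), exactly along the lines of the paragraph preceding the statement. The idea is to delete two edges, one of them being $e$, produce a $1$-factor of what remains, and observe that the complement of that $1$-factor is precisely the desired $2$-factor through $e$.

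In detail, I would first fix an auxiliary edge $f \in E(G)\setminus\{e\}$; such an edge exists because a $2$-edge-connected cubic multigraph has at least two vertices, hence at least three edges. Next, apply Theorem~\ref{thm:Plesnik} with $r=3$ and with the set of deleted edges $H=\{e,f\}$: since $G$ is $2$-edge-connected (that is, $(r-1)$-edge-connected) and regular of degree $r=3$ with $|V(G)|$ even, the graph $G-\{e,f\}$ has a $1$-factor $F$. Since $F\subseteq E(G)\setminus\{e,f\}$, in particular $e\notin F$. Finally, $G-F$ is obtained from the cubic multigraph $G$ by deleting a perfect matching, so every vertex of $G-F$ has degree $3-1=2$; thus $G-F$ is a spanning $2$-regular subgraph of $G$, i.e.\ a $2$-factor of $G$, and because $e\notin F$ we have $e\in E(G-F)$. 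Hence $G-F$ is a $2$-factor of $G$ that contains $e$.

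I do not anticipate a genuine obstacle: the statement is a direct specialization of Plesn\'{i}k's theorem, and the only matters requiring a word of care are that one must invoke the multigraph version of Theorem~\ref{thm:Plesnik}, and that one should exhibit a second edge $f$ so that $H$ has the prescribed size $r-1=2$ (alternatively, one may observe that deleting $e$ alone already leaves a spanning subgraph with a $1$-factor, since any $1$-factor of a spanning subgraph is a $1$-factor of the whole graph, so the edge $f$ is not strictly needed). Everything else reduces to the elementary fact that the complement of a perfect matching in a cubic multigraph is a disjoint union of cycles, hence a $2$-factor.
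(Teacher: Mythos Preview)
Your proposal is correct and follows exactly the same argument as the paper: delete $e$ together with an auxiliary edge $f$, apply Plesn\'{i}k's theorem (in its multigraph form) to obtain a $1$-factor $F$ of $G-\{e,f\}$, and observe that $G-F$ is a $2$-regular spanning subgraph containing $e$. Your write-up is slightly more detailed (justifying the existence of $f$ and noting it is not strictly needed), but the approach is identical.
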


We begin by first considering  claw-free cubic graphs that are $2$-edge-connected. We use similar notation in the following proof to that used in \cite{AK-2022}.

\begin{theorem}\label{thm:2connected} If $G$ is a $2$-edge-connected, claw-free cubic graph, then $G$ is $(1, 1, 2, 2)$-colorable.
\end{theorem}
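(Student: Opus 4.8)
The plan is to apply Oum's structure theorem (Theorem~\ref{thm:2factor}). If $G\cong K_4$ we give its four vertices the colors $1_a,1_b,2_a,2_b$ (each class a singleton), and if $G$ is a ring of diamonds we invoke Proposition~\ref{prop:diamonds}, so we may assume $G$ is of type (iii): obtained from a $2$-edge-connected cubic multigraph $H$ by replacing each vertex $v$ of $H$ with a triangle $T_v$ and some edges of $H$ with strings of diamonds. Each vertex of $T_v$ is incident in $G$ either to an unsubdivided edge of $H$ (a \emph{direct} edge) or to a string of diamonds. Since $H$ is cubic, $|V(H)|$ is even, so by Theorem~\ref{thm:Petersen} $H$ has a $2$-factor $\mathcal F_H$; set $M_H=E(H)\setminus E(\mathcal F_H)$, a perfect matching of $H$, and let $m_v$ be the unique edge of $M_H$ at $v$. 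I designate the vertex $p_v$ of $T_v$ incident to $m_v$ (or to the string replacing $m_v$) as the \emph{apex} of $T_v$, and call the other two vertices $q_v$ and $r_v$.

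The coloring will put a $2$-color on each apex and on (most) internal vertices of the diamonds, and $1_a,1_b$ on everything else. The crucial fact about apexes is the following: for $v\ne u$ one has $d_G(p_v,p_u)\ge 3$ unless $\{u,v\}\in M_H$ and $m_v$ is a direct edge (in which case $p_vp_u\in E(G)$). Indeed the two edges of $\mathcal F_H$ at a vertex $w$ are attached in $T_w$ to $q_w$ and $r_w$, never to the apex $p_w$, and a string contains at least one diamond, so its two ends lie at distance at least $3$ in $G$; a short check of common neighbors excludes $d_G(p_v,p_u)\le 2$ in all remaining cases. Since each vertex of $H$ lies in a unique edge of $M_H$, the graph on $V(H)$ whose edges are the \emph{direct} edges of $M_H$ is itself a matching, hence properly $2$-colorable; I color the apexes with $2_a$ or $2_b$ according to such a $2$-coloring (and, say, $2_a$ for each apex whose matching edge is a string). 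Then any two apexes of the same color are at distance at least $3$ in $G$.

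Next I color the strings. A string replacing an edge of $\mathcal F_H$ has both ends among the $q_w,r_w$ (so no apex is an end of it); I color each of its diamonds \emph{standardly}, i.e.\ internal vertices $2_a,2_b$ and external vertices $1_a,1_b$, noting as in Proposition~\ref{prop:diamonds} that consecutive internal vertices lie at distance exactly $3$, and that $z_1,w_1$ lie at distance $3$ from the apex of the triangle at either end. A string replacing an edge $m_v=m_u$ of $M_H$ has its two ends equal to the apexes $p_v,p_u$; here $z_1,w_1$ lie at distance $2$ from $p_v$, so I recolor its first diamond $D_1$ (and symmetrically its last diamond $D_k$) so that $z_1,w_1$ avoid the color of $p_v$. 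A short case analysis — separating $k=1$, $k=2$ and $k\ge 3$, and using that for short such strings one may give $p_v$ and $p_u$ either equal or distinct $2$-colors — shows a suitable proper coloring of $D_1$ from $\{1_a,1_b,2_a,2_b\}$ always exists and keeps all like-colored $2$-vertices pairwise at distance at least $3$. Finally each triangle is colored by keeping the apex's $2$-color and giving $q_v,r_v$ the colors $1_a,1_b$; the vertices still uncolored induce a graph of maximum degree at most $2$ in which every cycle alternates between edges of the form $q_vr_v$ and direct edges of $\mathcal F_H$ (no cycle can run through a diamond, since each diamond has a vertex with a neighbor in the $2$-colored set), so every such cycle is even and this graph is bipartite; I color it with $1_a,1_b$.

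The hard part will be the verification of the two $2$-packing conditions, namely that the vertices colored $2_a$ (respectively $2_b$) form a $2$-packing of $G$. This amounts to checking distances at every interface: apex versus apex (handled by the distance fact and the matching argument above), apex versus a nearby internal vertex of a diamond (handled by the recoloring of the first and last diamonds of the matching strings), two consecutive internal vertices within one string (distance exactly $3$, as in Proposition~\ref{prop:diamonds}), and internal vertices of distinct strings attached to a common triangle (computed to be far apart). None of these calculations is deep, but they are numerous, must be organized with care, and the finitely many configurations of short matching strings must be treated by hand.
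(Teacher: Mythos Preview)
Your strategy is the paper's: Oum's structure theorem plus a $2$-factor of the underlying multigraph $H$, with the ``apex'' of each triangle (the vertex meeting the perfect-matching edge) receiving a $2$-color and the remaining triangle vertices and diamond vertices receiving $1$- or $2$-colors according to whether the string sits on a $2$-factor edge or a matching edge. Two places in your execution deserve attention, and one of them hides a genuine gap.

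Your $1$-coloring step is incomplete as written. You first assign $1_a,1_b$ to the external vertices of the $2$-factor-string diamonds, and only afterwards argue that the still-uncolored vertices $q_v,r_v$ induce a bipartite graph, which you then $2$-color. But a proper $2$-coloring of that $q,r$-subgraph need not be compatible with the colors already placed on its boundary: if both $2$-factor edges at some $v\in V(H)$ are strings and you happened to give both of the adjacent external vertices the color $1_a$, then $q_v$ and $r_v$ would each be forced to $1_b$, a contradiction since $q_vr_v\in E(G)$. The paper avoids this by orienting each cycle of the $2$-factor and labeling the $1$-colored vertices coherently along it (the ``$x$'' and ``$a$'' vertices get $1_a$, the ``$y$'' and ``$d$'' vertices get $1_b$), so that every edge between $1$-colored vertices automatically joins a $1_a$ to a $1_b$. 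An equivalent repair of your argument is to note that the \emph{entire} subgraph of intended $1$-vertices (the $q_v,r_v$ together with the $2$-factor-string externals) has maximum degree~$2$ and only even cycles, and $2$-color it in a single pass rather than in two stages.

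Your handling of strings on matching edges is workable but needlessly heavy, and the text wobbles between committing both apexes to $2_a$ and allowing ``either equal or distinct $2$-colors.'' The paper's device is cleaner and removes all case analysis on $k$: always give the two apexes of a matching edge \emph{different} $2$-colors, and then color each diamond of the replacing string \emph{dually} to the standard scheme---its external vertices take $2$-colors alternating $2_b,2_a,2_b,\ldots$ along the string (so adjacent ones differ), while its internal vertices take $1_a,1_b$. Any two same-$2$-colored vertices along such a string, and across the join with the apex, then sit at distance exactly~$3$.
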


\begin{proof} Since $K_4$ is clearly $(1, 1, 2, 2)$-colorable, we may assume by Theorem~\ref{thm:2factor} and Proposition~\ref{prop:diamonds} that $G$ can be obtained from a $2$-edge-connected, cubic multigraph $H$ by replacing each vertex of $H$ with a triangle and possibly replacing some edges of $H$ with strings of diamonds. By Theorem~\ref{thm:Petersen}, $H$ contains a $2$-factor $\mathcal{C'}$ that is a disjoint union of cycles, say $ \mathcal{C'}= C_1' \cup \cdots \cup C_k'$.

  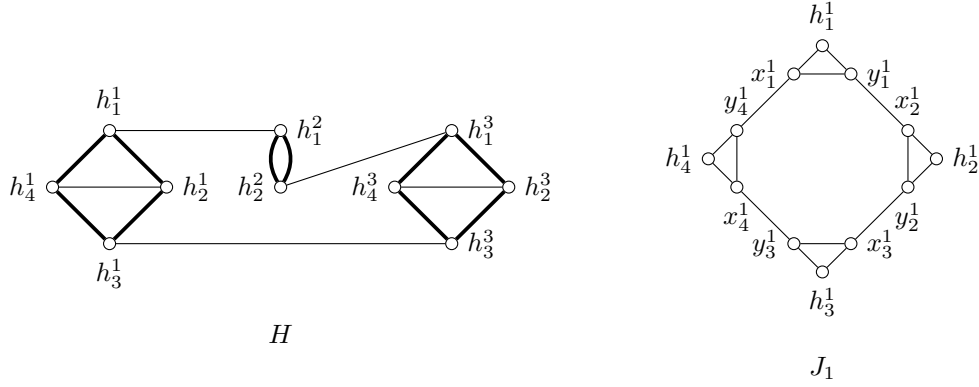
\begin{figure}[h]
\begin{center}
\begin{tikzpicture}[scale=.75]
\tikzstyle{vertex}=[circle, draw, inner sep=0pt, minimum size=6pt]
\tikzset{vertexStyle/.append style={rectangle}}
	\vertex (1) at (-1,0) [scale=.75, label=below:$h_3^1$] {};
	\vertex (2) at (-2, 1) [ scale=.75, label=left:$h_4^1$] {};
	\vertex (3) at (-1,2) [scale=.75, label=above:$h_1^1$] {};
	\vertex (4) at (0,1) [scale=.75, label=right:$h^1_2$] {};
	\vertex (5) at (2, 2) [scale=.75, label=right:$h^2_1$] {};
	\vertex (6) at (2,1) [scale=.75, label=left:$h^2_2$] {};
	\vertex (7) at (5, 2) [scale=.75, label=right:$h^3_1$] {};
	\vertex (8) at (6, 1) [scale=.75, label=right:$h^3_2$] {};
	\vertex (9) at (5, 0) [scale=.75, label=right:$h^3_3$] {};
	\vertex (10) at (4, 1) [scale=.75, label=left:$h^3_4$] {};
	\node(A) at (2, -1.6) []{$H$};
	\node(B) at (11.5, -2.2)[]{$J_1$};
	
	\vertex (11) at (11.5, 3.5) [scale=.75, label=above:$h^1_1$] {};
	\vertex (12) at (12, 3) [scale=.75, label=right:$y^1_1$] {};
	\vertex (13) at (13, 2) [scale=.75, label=above:$x^1_2$] {};
	\vertex (14) at (13.5, 1.5) [scale=.75, label=right:$h^1_2$] {};
	\vertex (15) at (13, 1) [scale=.75, label=below:$y^1_2$] {};
	\vertex (16) at (12, 0) [scale=.75, label=right:$x^1_3$] {};
	\vertex (17) at (11.5, -.5) [scale=.75, label=below:$h^1_3$] {};
	\vertex (18) at (11, 0) [scale=.75, label=left:$y^1_3$] {};
	\vertex (19) at (10,1) [scale=.75, label=below:$x^1_4$] {};
	\vertex (20) at (9.5, 1.5) [scale=.75, label=left:$h^1_4$] {};
	\vertex (21) at (10, 2) [scale=.75, label=above:$y^1_4$] {};
	\vertex (22) at (11, 3) [scale=.75, label=left:$x^1_1$] {};

	\path
	(1) edge[line width=1.4pt] (2)
	(2) edge[line width=1.4pt] (3)
	(3) edge[line width=1.4pt] (4)
	(4) edge[line width=1.4pt] (1)
	(5) edge[line width=1.4pt, bend right=30] (6)
	(5) edge[line width=1.4pt, bend left=30] (6)
	(7) edge[line width=1.4pt] (8)
	(8) edge[line width=1.4pt] (9)
	(9) edge[line width=1.4pt] (10)
	(7) edge[line width=1.4pt] (10)
	(2) edge (4)
	(3) edge (5)
	(6) edge (7)
	(1) edge (9)
	(8) edge (10)
	(11) edge (12)
	(12) edge (13)
	(13) edge (14)
	(14) edge (15)
	(15) edge (16)
	(16) edge (17)
	(17) edge (18)
	(18) edge (19)
	(19) edge (20)
	(20) edge (21)
	(21) edge (22)
	(22) edge (11)
	(22) edge (12)
	(13) edge (15)
	(16) edge (18)
	(19) edge (21)
	;
\end{tikzpicture}
\end{center}
\caption{Creating $J_1$ from $C_1'$ in a $2$-factor in the multigraph $H$.}
\label{fig:C_i}
\end{figure}

Suppose first that $G$ is obtained from $H$ by replacing each vertex of $H$ with a triangle, but not replacing any edge of $H$ with a string of diamonds. This means that for each  $i \in [k]$, we can write $C_i' : h_1^ih_2^i\dots h_{m_i}^ih_1^i$. Denote by $J_i$ the induced subgraph of $G$ with Hamiltonian cycle $x_1^ih_1^iy_1^ix_2^ih_2^iy_2^i\dots x_{m_i}^ih_{m_i}^iy_{m_i}^ix_1^i$, where $x_j^ih_j^iy_j^i$ represents replacing the vertex $h_j^i$ in $H$ with the triangle $x_j^ih_j^iy_j^i$. Note that the edge  $y_j^ix_{j+1}^i$ in $G$  represents the edge $h_j^ih_{j+1}^i$  on $C_i'$. (See Fig.~\ref{fig:C_i} which depicts creating $J_1$ from $C_1'$ in a $2$-factor of the underlying multigraph $H$ for $G$). Furthermore, there exists a perfect matching in $G$ that saturates the set $S$ defined by

\[S=\bigcup_{i\in[k]} \{h^i_1, h^i_2, h^i_3, \dots, h^i_{m_i}\}\]
since there exists such a perfect matching $M$ in $H$.   In fact, removing the edges of the $2$-factor from $H$, each vertex of $H$ has degree $1$ in the resulting graph. (For instance, in Fig.~\ref{fig:C_i}, this perfect matching consists of the edges: $h_4^1h_2^1$, $h_1^1h_1^2$, $h_2^2h_1^3$, $h_4^3h_2^3$ and $h_3^1h_3^3$.)
Let $M = \{w_1v_1, \dots , w_tv_t\}$. For each edge $w_iv_i \in M$, assign $w_i$ the color $2_a$ and $v_i$ the color $2_b$. We also assign each vertex of the form $x^i_j$ the color $1_a$ and each vertex of the form $y^i_j$ the color $1_b$. One can easily verify that this is indeed a $(1, 1, 2, 2)$-coloring of $G$.

Next, suppose that $G$ is obtained from $H$ by replacing each vertex of $H$ with a triangle and some edges of $H$ with strings of diamonds. As above, for each $i \in [k]$ we write $C_i': h_1^ih_2^i\dots h_{m_i}^ih_1^i$. Denote by  $J_i$ the subgraph of $G$ induced by $\{x_1^i,h_1^i,y_1^i,x_2^i,h_2^i,y_2^i,\dots, x_{m_i}^i,h_{m_i}^i,y_{m_i}^i\}$, where $x_j^i,h_j^i,y_j^i$ are the vertices of the triangle $x_j^ih_j^iy_j^i$, which replaced the vertex $h_j^i$ in $H$. It is possible that some edges of the form $y^i_jx^i_{j+1}$ are not in $G$, since they were replaced with a string of diamonds, using diamonds $D^{ij}_1, \dots, D^{ij}_{r_{ij}}$. We will refer to this string of diamonds as a {\em Type $1$ string of diamonds}, where the string of diamonds replaced an edge of $J_i$ that corresponds to a cycle of a 2-factor of $H$.
Label the vertices on each $D^{ij}_{\ell}$ for $\ell \in [r_{ij}]$ as $a^{ij}_{\ell}, b^{ij}_{\ell}, c^{ij}_{\ell}, d^{ij}_{\ell}$ where $b^{ij}_{\ell}$ and $c^{ij}_{\ell}$ are the interior vertices of $D^{ij}_{\ell}$, $d^{ij}_{\ell}$ is adjacent to $a^{ij}_{\ell+1}$ in the string of diamonds, and in $G$ $a^{ij}_1$ is adjacent to $y^i_j$ and $d^{ij}_{r_{ij}}$ is adjacent to $x^i_{j+1}$.

As above, there exists a perfect matching $M$ between the vertices of \[S=\bigcup_{i\in[k]} \{h^i_1, h^i_2, h^i_3, \dots, h^i_{m_i}\}.\]
Let $M =  \{w_1v_1, \dots , w_tv_t\}$. Next, we describe a coloring of the vertices of $G$, for which we can verify it is a $(1,1,2,2)$-coloring.

For each $w_iv_i \in M$, color $w_i$ with $2_a$ and $v_i$ with $2_b$. Next, color each vertex of the form $x^i_j$ with $1_a$ and each vertex of the form $y^i_j$ with $1_b$. For each $\ell \in [r_{ij}]$ where $y_j^ix_{j+1}^i$ is replaced with a string of diamonds, color each vertex of the form $b^{ij}_{\ell}$ with $2_a$, each vertex of the form $c^{ij}_{\ell}$  with $2_b$, each vertex of the form $d^{ij}_{\ell}$ with $1_b$, and each vertex of the form $a^{ij}_{\ell}$ with $1_a$.  Finally, suppose some edge of $M$, say $w_iv_i$, is replaced with a string of diamonds using diamonds $D^i_1, \dots, D^i_{s_i}$. We will refer to such a string of diamonds as a {\em Type $2$ string of diamonds}. Label the vertices on each $D^i_{\ell}$ for $\ell \in [s_i]$ as $a^i_{\ell}, b^i_{\ell}, c^i_{\ell}, d^i_{\ell}$ where $b^i_{\ell}$ and $c^i_{\ell}$ are the interior vertices of $D^i_{\ell}$, $d^i_{\ell}$ is adjacent to $a^i_{\ell +1}$ in the string of diamonds, and in $G$ $w_i$ is adjacent to $a^i_1$ and $v_i$ is adjacent to $d^i_{s_i}$. If $w_i$ is assigned the color $2_a$, then assign each vertex of the form $a^i_{\ell}$ the color $2_b$ and each vertex of the form $d^i_{\ell}$ the color $2_a$. On the other hand, if $w_i$ is assigned the color $2_b$, then assign each vertex of the form $a^i_{\ell}$ the color $2_a$ and each vertex of the form $d^i_{\ell}$ the color $2_b$. Regardless of the color assigned to $w_i$, assign each vertex of the form $b^i_{\ell}$ the color $1_a$ and each vertex of the form $c^i_{\ell}$ the color $1_b$. (See also Fig~\ref{fig:coloring} which depicts how such a $(1, 1, 2, 2)$-coloring is created based on a chosen $2$-factor from the underlying multigraph $H$ of $G$, where vertices representing colors $2_a$ and $2_b$ are enlarged, black vertices represent color $1_a$, and white vertices represent color $1_b$.) It is straightforward to verify that the resulting coloring is indeed a $(1,1,2,2)$-coloring of $G$.
\end{proof}

\medskip

A $(1,1,2,2)$-coloring of a $2$-connected, claw-free cubic graph $G$, presented in the proof of Theorem~\ref{thm:2connected}, will be called a {\em canonical coloring} of $G$. We will refer to it in the proof of the result, where we omit the $2$-connectedness condition. See Fig.~\ref{fig:coloring} presenting such a coloring.

  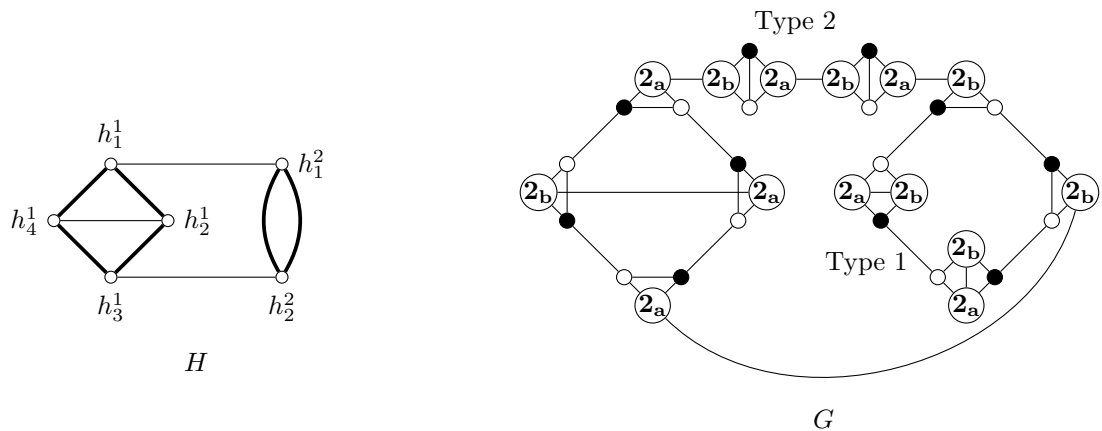
\begin{figure}[h]
\begin{center}
\begin{tikzpicture}[scale=.75]
\tikzstyle{vertex}=[circle, draw, inner sep=0pt, minimum size=6pt]
\tikzset{vertexStyle/.append style={rectangle}}
	\vertex (1) at (0,0) [scale=.75, label=below:$h_3^1$] {};
	\vertex (2) at (-1, 1) [ scale=.75, label=left:$h_4^1$] {};
	\vertex (3) at (0,2) [scale=.75, label=above:$h_1^1$] {};
	\vertex (4) at (1,1) [scale=.75, label=right:$h^1_2$] {};
	\vertex (5) at (3, 2) [scale=.75, label=right:$h^2_1$] {};
	\vertex (6) at (3,0) [scale=.75, label=below:$h^2_2$] {};
	\node(A) at (1.5, -1.5) []{$H$};
	\node(B) at (12.5, -2.5)[]{$G$};
	\node(C) at (12, 4.5)[]{Type 2};
	\node(D) at (13.25, 0.25)[]{Type 1};

	\vertex (11) at (9.5, 3.5) {$\mathbf{2_a}$};
	\vertex (12) at (10, 3) [label=right:$$] {};
	\vertex (13) at (11, 2) [scale=.95, fill=black,label=above:$$] {};
	\vertex (14) at (11.5, 1.5) [] {$\mathbf{2_a}$};
	\vertex (15) at (11, 1) [scale=.95, label=below:$$] {};
	\vertex (16) at (10, 0) [scale=.95,fill=black, label=right:$$] {};
	\vertex (17) at (9.5, -.5)  {$\mathbf{2_a}$};
	\vertex (18) at (9, 0) [scale=.95, label=left:$$] {};
	\vertex (19) at (8,1) [scale=.95,fill=black, label=below:$$] {};
	\vertex (20) at (7.5, 1.5)  {$\mathbf{2_b}$};
	\vertex (21) at (8, 2) [scale=.95, label=above:$$] {};
	\vertex (22) at (9, 3) [scale=.95, fill=black,label=left:$$] {};
	
	\vertex (23) at (10.7, 3.5) {$\mathbf{2_b}$};
	\vertex (24) at (11.2, 4) [scale=.95, fill=black,label=left:$$] {};
	\vertex (25) at (11.2, 3) [scale=.95, label=left:$$] {};
	\vertex (26) at (11.7, 3.5) [] {$\mathbf{2_a}$};
	\vertex (27) at (12.8, 3.5) [] {$\mathbf{2_b}$};
	\vertex (28) at (13.3, 4) [scale=.95, fill=black,label=left:$$] {};
	\vertex (29) at (13.3, 3) [scale=.95, label=left:$$] {};
	\vertex (30) at (13.8, 3.5) [] {$\mathbf{2_a}$};
	\vertex (31) at (15, 3.5)  {$\mathbf{2_b}$};
	\vertex (32) at (15.5, 3) [scale=.95, label=left:$$] {};
	\vertex (33) at (16.5, 2) [scale=.95, fill=black, label=left:$$] {};
	\vertex (34) at (17, 1.5)  {$\mathbf{2_b}$};
	\vertex (35) at (16.5, 1) [scale=.95, label=left:$$] {};
	\vertex (36) at (15.5, 0) [scale=.95, fill=black,label=left:$$] {};
	\vertex (37) at (15, .5) [] {$\mathbf{2_b}$};
	\vertex (38) at (15, -.5) [] {$\mathbf{2_a}$};
	\vertex (39) at (14.5, 0) [scale=.95, label=left:$$] {};
	\vertex (40) at (13.5, 1) [scale=.95,fill=black, label=left:$$] {};
	\vertex (41) at (13, 1.5) [] {$\mathbf{2_a}$};
	\vertex (42) at (14, 1.5) [] {$\mathbf{2_b}$};
	\vertex (43) at (13.5, 2) [scale=.95, label=left:$$] {};
	\vertex (44) at (14.5, 3) [scale=.95, fill=black,label=left:$$] {};

	\path
	(1) edge[line width=1.4pt] (2)
	(2) edge[line width=1.4pt] (3)
	(3) edge[line width=1.4pt] (4)
	(4) edge[line width=1.4pt] (1)
	(1) edge (6)
	(5) edge[line width=1.4pt,bend right=30] (6)
	(5) edge[line width=1.4pt,bend left=30] (6)
	(2) edge (4)
	(3) edge (5)
	(11) edge (12)
	(12) edge (13)
	(13) edge (14)
	(14) edge (15)
	(15) edge (16)
	(16) edge (17)
	(17) edge (18)
	(18) edge (19)
	(19) edge (20)
	(20) edge (21)
	(21) edge (22)
	(22) edge (11)
	(22) edge (12)
	(13) edge (15)
	(16) edge (18)
	(19) edge (21)
	(11) edge (23)
	(23) edge (24)
	(23) edge (25)
	(24) edge (25)
	(24) edge (26)
	(25) edge (26)
	(26) edge (27)
	(27) edge (28)
	(27) edge (29)
	(28) edge (29)
	(28) edge (30)
	(29) edge (30)
	(30) edge (31)
	(31) edge (32)
	(32) edge (33)
	(33) edge (34)
	(34) edge (35)
	(35) edge (36)
	(36) edge (37)
	(37) edge (38)
	(38) edge (39)
	(39) edge (40)
	(40) edge (41)
	(41) edge (42)
	(42) edge (43)
	(43) edge (44)
	(44) edge (31)
	(32) edge (44)
	(33) edge (35)
	(36) edge (38)
	(37) edge (39)
	(40) edge (42)
	(41) edge (43)
	(14) edge (20)
	(17) edge[bend right=60] (34)
	
	;
\end{tikzpicture}
\end{center}
\caption{Creating a $(1, 1, 2, 2)$-coloring for $G$ based on the $2$-factor chosen in the underlying multigraph $H$.}
\label{fig:coloring}
\end{figure}

\medskip
From the construction in the proof of Theorem~\ref{thm:2connected}, we infer the following property of a canonical coloring, which will be used in the rest of the section.
\begin{observation}
\label{obs:2}
Let $G\ne K_4$ be a $2$-edge-connected, claw-free cubic graph, which is not a ring of diamonds, and let $c$ be a canonical $(1, 1, 2, 2)$-coloring of $G$. If $u$ is assigned the color $1_a$ (resp. $1_b$), then either $u$ has two neighbors with color $1_b$ (resp. $1_a$), or $u$ is on a diamond.
\end{observation}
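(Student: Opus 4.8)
The plan is to read off, from the explicit coloring rules in the proof of Theorem~\ref{thm:2connected}, exactly which vertices of $G$ receive $1_a$ or $1_b$ under the canonical coloring $c$, and then to check the claimed dichotomy for each of them. Since $G\ne K_4$ is not a ring of diamonds, Theorem~\ref{thm:2factor} guarantees that $G$ is obtained from a $2$-edge-connected cubic multigraph $H$ by replacing each vertex with a triangle and (possibly) some edges with strings of diamonds, so all the notation from that proof is available: the triangle vertices $x^i_j$ and $y^i_j$, the Type~$1$ vertices $a^{ij}_\ell,b^{ij}_\ell,c^{ij}_\ell,d^{ij}_\ell$, and the Type~$2$ vertices $a^i_\ell,b^i_\ell,c^i_\ell,d^i_\ell$.

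Inspecting those rules, the vertices colored $1_a$ are exactly the triangle vertices $x^i_j$, the external Type~$1$ vertices $a^{ij}_\ell$, and the interior Type~$2$ vertices $b^i_\ell$; symmetrically, the vertices colored $1_b$ are the triangle vertices $y^i_j$, the external Type~$1$ vertices $d^{ij}_\ell$, and the interior Type~$2$ vertices $c^i_\ell$. Every vertex of the forms $a^{ij}_\ell,d^{ij}_\ell,b^i_\ell,c^i_\ell$ lies on a diamond by construction, so for these the second alternative of the observation holds trivially, and it remains only to handle triangle vertices. For $u=x^i_j$ (the case $u=y^i_j$ is symmetric under exchanging $1_a\leftrightarrow 1_b$, $x\leftrightarrow y$, $a\leftrightarrow d$) I would simply list the three neighbors of $u$ in $G$: the two other triangle vertices $h^i_j$ and $y^i_j$, where $h^i_j$ is an endpoint of a matching edge of $M$ and is therefore colored $2_a$ or $2_b$, and $y^i_j$ is colored $1_b$; and the third neighbor of $u$, which corresponds to the $2$-factor edge $h^i_{j-1}h^i_j$ of $C'_i$. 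If that edge was not subdivided into a string of diamonds, this neighbor is $y^i_{j-1}$, which is colored $1_b$; if it was replaced by a Type~$1$ string, the neighbor is the last external vertex $d^{i,j-1}_{r_{i,j-1}}$ of that string, which is again colored $1_b$. In either case $u$ has two neighbors colored $1_b$, which gives the first alternative.

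I do not expect a genuine obstacle here; the argument is essentially bookkeeping. The only two points requiring care are verifying that the enumeration of $1_a$- and $1_b$-colored vertices is exhaustive, and correctly identifying the ``outside'' neighbor of a triangle vertex in the two sub-cases, according to whether the relevant edge of the chosen $2$-factor of $H$ survives in $G$ or was replaced by a string of diamonds. Once these neighbor lists are written down, the verification of the observation is immediate.
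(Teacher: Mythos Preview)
Your proposal is correct and matches the paper's own justification, which is not a separate proof at all: the paper simply asserts that Observation~\ref{obs:2} follows ``from the construction in the proof of Theorem~\ref{thm:2connected}.'' Your case analysis---enumerating the $1_a$/$1_b$-colored vertices as triangle vertices, Type~1 exterior vertices, and Type~2 interior vertices, then checking the neighbors of $x^i_j$ and $y^i_j$---is exactly the bookkeeping that the paper leaves to the reader.
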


\vskip1mm

Our next goal is to extend our $(1, 1, 2, 2)$-coloring to a claw-free cubic graph with bridges. Suppose $G$ is a graph with non-empty bridge set $B(G)$. For each edge  $e$ in  $B(G)$, there exist distinct components $G_i$ and $G_j$ of $G - B(G)$ such that $e$ is incident to some vertex in $V(G_i)$ and some vertex in $V(G_j)$. In such a case, we will say that {\it $e$ is incident to  $G_i$ and $G_j$}.

Let $G$ be a connected, claw-free cubic graph with $\vert B(G) \vert = b \geq 0$,  and let $G_0, G_1, \dots, G_b$ be the components of $G - B(G)$.   Let $T_G$ denote the simple graph with vertex set $V(T_G) = \{g_0, g_1, g_2, \dots, g_b\}$ and edge set  $E(T_G)= \{g_ig_j \mid $ some edge $e \in B(G)$ is incident to $G_i$ and $G_j\}$. We observe that $T_G$ is a tree, and that  $G$ has no bridges if and only if  $T_G$ is isomorphic to $K_1$.
Notice first that if $0 \le i \le b$, then no vertex in $G_i$ is a leaf.
Indeed, suppose $v$ is a leaf in $G_i$ that is adjacent to its support vertex $w$ in $G_i$.  The other two edges incident to $v$, say $e_j=vu_j$ and $e_k=vu_k$, belong to $B(G)$.  This implies that $\{v,w,u_j,u_k\}$ induces a claw in $G$, which is a contradiction.
Thus, if $G_i\ne K_1$, then $\delta(G_i) \ge 2$ which implies that $n(G_i) \ge 3$. We may also conclude that
$G_i$ contains no bridges. Indeed, since $T_G$ is a tree, if $uv$ is a bridge in $G_i$, there does not exist a path from the component of $G_i-uv$ that contains $u$ to the component of $G_i-uv$ that contains $v$ except for the trivial path $uv$. Since $uv \not\in B(G)$, there exists another $uv$-path $P$ in $G$ that is not a path in $G_i$, meaning $P$ contains a bridge $xy\in B(G)$. However, this implies that $G-xy$ contains an $xy$-path, contradicting the assumption that $xy$ is a bridge.
Therefore, we may classify each component $G_i$ as follows:
\vskip 5pt
\indent Type I:  isomorphic to $K_1$, or
\vskip 5pt
\indent Type II:  isomorphic to the $m$-cycle $C_m$ for some $m \geq 3$, or
\vskip 5pt
\indent Type III: isomorphic to some 2-edge-connected graph with maximum degree $3$.
\vskip 5pt

\noindent We note that if $G_i$ is of Type I or Type II, then $b > 0$ and $g_i$ is an interior vertex of $T_G$. Otherwise, if $G_i$ is of Type III, then  $g_i$ is a leaf of $T_G$ if and only if $G_i$ has precisely one vertex of degree 2.   To illustrate, we observe that for the graph $G$ of Figure~\ref{fig:exampleGi}, $T_{G}$ is isomorphic to $K_{1,3}$ and $G - B(G)$ has one component of Type II and three components of Type III.

Since we are only dealing with claw-free graphs, the following observation will also be useful.

\begin{observation}
\label{obs:1}
If $G$ is a claw-free graph, then no $G_i$ is of Type I, and if $G_i$ is of Type II, then $G_i = K_3$.
\end{observation}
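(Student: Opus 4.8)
The plan is to prove Observation~\ref{obs:1} by a short direct argument that mirrors, but is slightly stronger than, the leaf argument already given in the text. Recall that $G_i$ is a component of $G - B(G)$ and that $G$ is claw-free cubic. Since $G$ is cubic, every vertex $v\in V(G_i)$ has degree $3$ in $G$ and degree $\deg_{G_i}(v)\in\{1,2,3\}$ in $G_i$, the deficiency $3-\deg_{G_i}(v)$ being the number of bridges of $B(G)$ incident with $v$.

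First I would dispatch Type I. If $G_i\cong K_1$, its single vertex $v$ has all three incident edges in $B(G)$; since $G$ is connected and cubic on more than one vertex, each of these three bridges leads to a distinct component, so the three neighbors of $v$ are pairwise nonadjacent. Hence $\{v\}$ together with these three neighbors induces a $K_{1,3}$, contradicting claw-freeness. (The only way to avoid this is $G\cong K_1$, which is not cubic, so the claim holds.) Therefore no $G_i$ is of Type I.

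Next I would handle Type II. Suppose $G_i\cong C_m$ with $m\ge 3$, say on the cycle $v_1v_2\cdots v_mv_1$. Each $v_j$ has degree $2$ in $G_i$, so exactly one edge of $B(G)$ is incident with $v_j$; call its other endpoint $u_j$. Now look at $v_1$: its three neighbors in $G$ are $v_2$, $v_m$, and $u_1$. Claw-freeness forces two of these three to be adjacent. The edge $v_1u_1$ is a bridge, so $u_1$ lies in a different component of $G-B(G)$ than $v_1$, hence $u_1$ is not adjacent to $v_2$ or to $v_m$ (any such edge would be a second edge between the two components, contradicting that $v_1u_1$ is a bridge). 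Thus the only possibility is $v_2v_m\in E(G)$, which on the cycle $C_m$ can happen only if $m=3$ (so that $v_2$ and $v_m=v_3$ are consecutive). Hence $G_i\cong K_3$.

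I do not expect any serious obstacle here; the whole content is the two three-case claw checks above, and the key observation in each is that a neighbor reached through a bridge cannot be adjacent to anything inside the component without violating the bridge property. The one point to state carefully is the justification that $u_1$ (resp.\ the neighbors across bridges in the Type~I case) cannot coincide with or be adjacent to the in-component neighbors, which follows immediately from the definition of a bridge and the fact that $T_G$ is a tree; all of this is already available from the discussion preceding the statement.
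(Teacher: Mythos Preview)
The paper does not give a separate proof of this observation; it is stated as an immediate consequence of the claw-free hypothesis, relying implicitly on the same mechanism as the leaf argument in the paragraph preceding the Type~I/II/III classification. Your proof is correct and supplies precisely the expected details along the same lines: in each case you exhibit a claw centered at a vertex of $G_i$ of degree less than~$3$, using that a neighbor reached via a bridge cannot be adjacent to the in-component neighbors. One tiny point worth making explicit in the Type~II case is that $v_2v_m\in E(G)$ forces $v_2v_m\in E(G_i)$ (an edge between two vertices of the same component of $G-B(G)$ cannot be a bridge), which is what pins down $m=3$; you use this but do not quite say it.
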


\smallskip

\begin{figure}[h]
\begin{center}
\begin{tikzpicture}[scale=.75]
\tikzstyle{vertex}=[circle, draw, inner sep=0pt, minimum size=6pt]
\tikzset{vertexStyle/.append style={rectangle}}
	\vertex (1) at (0.75,0.75) [fill, scale=.75] {};
	\vertex (2) at (2.25,0.75) [fill, scale=.75] {};
	\vertex (3) at (1.5,1.5) [fill, scale=.75]{};
	\vertex (4) at (1.5, 2.5) [fill, scale=.75]{};
	\vertex (5) at (2.25, 3.5) [fill, scale=.75]{};
	\vertex (6) at (.75, 3.5) [fill, scale=.75]{};
	\vertex (7) at (-.25, .75) [fill, scale=.75]{};
	\vertex (8) at (-1.25, 1.5) [fill, scale=.75]{};
	\vertex (9) at (-1.25, 0) [fill, scale=.75]{};
	\vertex (10) at (3.25, .75) [fill, scale=.75]{};
	\vertex (11) at (4.25, 1.5) [fill, scale=.75]{};
	\vertex (12) at (4.25, 0) [fill, scale=.75]{};

    \vertex (a) at (2.25,4.5) [fill, scale=.75]{};
    \vertex (b) at (.75,4.5) [fill, scale=.75]{};
    \vertex (c) at (-2.25,0) [fill, scale=.75]{};
    \vertex (d) at (-2.25,1.5) [fill, scale=.75]{};
    \vertex (r) at (5.25,1.5) [fill, scale=.75]{};
    \vertex (s) at (5.25,0) [fill, scale=.75]{};
    \vertex (f) at (-2.25,.75) [fill, scale=.75]{};
    \vertex (g) at (-3.25,.75) [fill, scale=.75]{};
    \vertex (t) at (5.25,.75) [fill, scale=.75]{};
    \vertex (u) at (6.25,.75) [fill, scale=.75]{};
    \vertex (p) at (.75,5.5) [fill, scale=.75]{};
    \vertex (q) at (2.25,5.5) [fill, scale=.75]{};
	\path
		(1) edge (2)
		(2) edge (3)
		(1) edge (3)
		(3) edge (4)
		(4) edge (5)
		(4) edge (6)
        (5) edge (6)
        (a) edge (p)
        (a) edge (q)
        (p) edge (q)
		(6) edge (b)
        (5) edge (a)
        (b) edge (p)
        (b) edge (q)
		(1) edge (7)
        (c) edge (d)
		(7) edge (8)
		(7) edge (9)
		(8) edge (d)
		(8) edge (9)
        (9) edge (c)
		(c) edge (g)
        (c) edge (f)
        (f) edge (g)
        (d) edge (f)
        (d) edge (g)
		(2) edge (10)
        (r) edge (s)
		(10) edge (11)
		(10) edge (12)
		(11) edge (r)
		(11) edge (12)
        (12) edge (s)
		(s) edge (t)
        (s) edge (u)
        (r) edge (t)
        (r) edge (u)
        (t) edge (u)
	;
\end{tikzpicture}
\end{center}
\caption{ The graph $G$ with $T_G=K_{1,3}$.} \label{fig:exampleGi}
\end{figure}
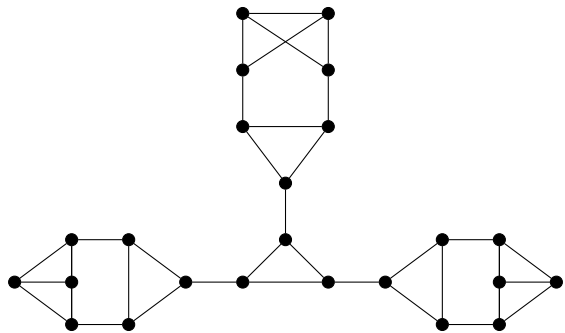

In what follows, we will consider the components $G_0, \dots, G_b$ of $G- B(G)$, which are $2$-edge-connected , but unfortunately not cubic. Note that if such a component $G_i$ has fewer than $5$ vertices, then $G_i$ is isomorphic to either $K_3$ or a diamond. These two cases turn out to be easy and we will present appropriate colorings separately. If the order of $G_i$ is at least $5$, then we will create a $2$-edge-connected, claw-free cubic graph $\widetilde{G_i}$ from $G_i$, and use the same coloring strategy for $\widetilde{G_i}$ as that given in the proof of Theorem~\ref{thm:2connected}. That is, we will use a canonical coloring for every such graph $\widetilde{G_i}$.

Now we are ready to prove that any claw-free cubic graph is $(1, 1, 2, 2)$-colorable.

\begin{theorem}\label{thm:main}
If $G$ is a claw-free cubic graph, then $G$ is $(1, 1, 2, 2)$-colorable.
\end{theorem}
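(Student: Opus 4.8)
The plan is to reduce the general case to the $2$-edge-connected case established in Theorem~\ref{thm:2connected}, by cutting $G$ along its bridges, colouring the resulting pieces, and reassembling. We may assume $G$ is connected, and if $B(G)=\varnothing$ we are done by Theorem~\ref{thm:2connected}; so assume $b\ge 1$ and consider the tree $T_G$ together with the components $G_0,\dots,G_b$ of $G-B(G)$. As recalled before Observation~\ref{obs:1}, each $G_i$ is a $2$-edge-connected graph of maximum degree $3$, being $K_3$ or a diamond when $|V(G_i)|<5$; its vertices of degree $2$ --- which we call the \emph{interface vertices} of $G_i$ --- are exactly the endpoints inside $G_i$ of the bridges, one bridge at each, and there is at least one since $T_G$ is connected. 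A preliminary fact used throughout: every interface vertex $u$ lies in a triangle of $G_i$, because otherwise its two neighbours inside $G_i$ together with its bridge-neighbour would induce a claw in $G$.

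First I would colour each component so as to control its interface vertices. When $|V(G_i)|\ge 5$, I build a $2$-edge-connected, claw-free \emph{cubic} graph $\widetilde{G_i}$ by attaching at each interface vertex a small diamond-based gadget that raises its degree to $3$; the fact that the interface vertex already lies in a triangle of $G_i$ is precisely what keeps $\widetilde{G_i}$ claw-free. Applying Theorem~\ref{thm:2connected}, I take a canonical colouring of $\widetilde{G_i}$ and restrict it to $G_i$; choosing the gadgets appropriately, every interface vertex then gets a colour of type $1$, that is, $1_a$ or $1_b$. This fits Observation~\ref{obs:2}: such a vertex cannot have two equally type-$1$-coloured neighbours, its two neighbours inside $G_i$ being adjacent, so in $\widetilde{G_i}$ it must lie on a diamond --- the one supplied by its gadget. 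When $G_i$ is a diamond, I colour it so that its two external (hence interface) vertices receive type-$1$ colours while the two internal vertices take the remaining colours. When $G_i\cong K_3$, two of its three mutually adjacent vertices receive $1_a$ and $1_b$ and the third is forced to take a colour of type $2$; which vertex of the triangle this is, and which of $2_a,2_b$ it is, I leave to the reassembly.

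Then I would glue. Orienting $T_G$ away from an arbitrary root, I process the components from the root outwards: with the parent already coloured, for a child $G_c$ joined to it by a bridge $uv$ (with $u$ in the parent), I am free to swap $1_a\leftrightarrow 1_b$ and, independently, $2_a\leftrightarrow 2_b$ throughout $G_c$, and, if $G_c\cong K_3$, also to pick its type-$2$ vertex (so that it is not $v$) and its type-$2$ colour. These freedoms make $c(v)$ compatible with $c(u)$: generically both are of type $1$ and merely have to differ, the lone awkward case being that $u$ is the type-$2$ vertex of a $K_3$-parent --- then $v$ is of type $1$, so the two endpoints are automatically unequal, and the only surviving condition is that no $2_a$- (or $2_b$-) coloured vertex lie within distance $2$ of $u$, which is arranged using the $2$-colour swaps together with the fact that a $K_3$-component has only three neighbours in $T_G$. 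Since distinct components share at most one bridge and $T_G$ is a tree, no clash of two type-$1$ colours is ever forced along a bridge, no two adjacent equally type-$2$-coloured vertices occur, and the remaining distance-$2$ conditions are internal to the components (already guaranteed) or localized at the bridges meeting a $K_3$- or diamond-component (checked directly). I expect the crux to be exactly this bookkeeping --- designing the gadgets so that $\widetilde{G_i}$ is simultaneously cubic, claw-free, $2$-edge-connected, and puts a colour of type $1$ on every interface vertex, and showing that the type-$2$ colours propagated out of $K_3$- and diamond-components can always be reconciled; granting it, checking that the assembled colouring meets every $(1,1,2,2)$-constraint is a routine edge-by-edge verification.
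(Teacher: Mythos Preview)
Your decomposition along $T_G$ matches the paper's, but two of the steps do not go through as written. First, the construction of $\widetilde{G_i}$: a ``diamond-based gadget'' attached at a single interface vertex $u$ contributes exactly one new edge at $u$ (its degree must rise from $2$ to $3$), and that edge is then a bridge of $\widetilde{G_i}$, so $\widetilde{G_i}$ is not $2$-edge-connected and Theorem~\ref{thm:2connected} does not apply. Your appeal to Observation~\ref{obs:2} is also in the wrong direction: it says that a type-$1$ vertex either has two equally coloured type-$1$ neighbours or lies on a diamond, not that lying on a diamond forces a type-$1$ colour; so it cannot be used to push interface vertices into $\{1_a,1_b\}$.

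Second, and more concretely, your colouring of a diamond component (externals in $\{1_a,1_b\}$, internals $2_a,2_b$) clashes with the $K_3$ case. Take a $K_3$-component $G_j$ whose two children in $T_G$ are both diamond components; this configuration does occur in claw-free cubic graphs. Whichever vertex of $G_j$ carries the type-$2$ colour is joined by a bridge to an external vertex of one of these diamonds, and both internal vertices of that diamond, coloured $2_a$ and $2_b$, then sit at distance exactly $2$ from it. No swap $2_a\leftrightarrow 2_b$ in the child removes both, so that vertex of the $K_3$ cannot be coloured at all; your clause ``arranged using the $2$-colour swaps together with the fact that a $K_3$-component has only three neighbours in $T_G$'' does not cover this. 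The paper avoids the problem by doing the opposite of what you propose: the upward interface vertex $x_1^i$ of each component is given a colour in $\{2_a,2_b\}$ (so a diamond component gets $2_a,2_b$ on its externals and $1_a,1_b$ on its internals). Claim~2 there shows that in the parent's canonical colouring the closed neighbourhood of the down-going interface vertex always omits one of $2_a,2_b$, and $\widetilde{G_i}$ is built not with pendant gadgets but by adding a matching between the interface vertices (together with deleting the triangle at $x_1^i$ when their number is odd), which stays $2$-edge-connected and lets Theorem~\ref{thm:multigaph-1factor} place the relevant edge into the $2$-factor or into the perfect matching as required.
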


\begin{proof} We may assume that $G$ is connected.  We let $B(G)$ represent the set of bridges in $G$ and let $G_0, G_1, \dots, G_b$ be the components of $G - B(G)$. If $b=0$, then the conclusion follows by Theorem~\ref{thm:2connected}. We consider the tree $T_G$ with $V(T_G) = \{g_0, g_1, \dots, g_b\}$ where $g_ig_j \in E(T_G)$ if and only if there exists $e \in B(G)$ incident to $G_i$ and $G_j$. Choose a path $P$ of $T_G$ of length $d={\rm{diam}}(T_G)$ and let $g_0$ be a leaf of $P$. We root $T_G$ at $g_0$ and define $A_i = \{v \in V(T_G) \mid d_{T_G}(g_0, v) = i\}$. Clearly, $A_0 = \{g_0\}$. Note that if $i>0$, then each vertex in $A_i$ has exactly one neighbor in $A_{i-1}$. It follows that if $g_j \in A_i$ represents component $G_j$, then there exists a unique vertex $p \in V(G_j)$ with degree $2$ in $G_j$ such that $p$ has a neighbor $q\in V(G_k)$ where $g_k$ is the parent of $g_j$ in $T_G$. In such a case, we call $q$ the ``up-neighbor" of $p$. We also know that every vertex of $A_d$ is a leaf in $T_G$. Reindexing if necessary, we may assume that $g_0$ represents $G_0$.

First, consider $g_0$, which is a leaf in $T_G$. Note that $G_0$ is of Type III and there exists only one vertex of degree $2$ in $G_0$, call it $v$. Let $u$ and $w$ be the neighbors of $v$ in $G_0$. Note that $u$ and $w$ are adjacent in $G$ since $G$ is claw-free. Let $s$ be the neighbor of $u$ different from $v$ and $w$, and let $y$ be the neighbor of $w$ different from $u$ and $v$.  Note that $s\ne y$ for otherwise $G_0$ is a diamond with two vertices of degree $2$, which is a contradiction.

We claim that  $s$ and $y$ are not adjacent in $G$. If $sy\in E(G)$, then $s$ and $y$ have a common neighbor $z$ as $G$ is claw-free. However, this implies that either $z$ has degree $2$ which is impossible, or $z$ is incident to a bridge which is another contradiction. Thus, $s$ and $y$ are not adjacent in $G$. Consider the graph $\widetilde{G_0}$ obtained from $G_0$ by removing the vertices $u, v$, and $w$, and adding the edge $sy$.  We claim that $sy$ is not on a triangle of $\widetilde{G_0}$ unless $\widetilde{G_0} = K_4$. Suppose to the contrary that $syz$ is a triangle in $\widetilde{G_0}$. Since $\{u, z\}$ is independent in $G$ and $G$ is claw-free, the remaining neighbor of $s$ in $G$, call it $a$, is adjacent to $z$. Similarly, the remaining neighbor of $y$ in $G$, call it $d$, is adjacent to $z$. If $a\ne d$, then $\deg_G(z) = 4$, which is a contradiction. Thus, $a=d$ and $\widetilde{G_0}= K_4$. In this case,  we assign $s$ and $w$ the color $1_b$, assign $u$ and $y$ the color $1_a$, assign $v$ and $a$ the color $2_a$ and assign $z$ the color $2_b$.

Therefore, we may assume $sy$ is not on a triangle of $\widetilde{G_0}$ and  $\widetilde{G_0} \ne K_4$.  Let $H$ be the underlying multigraph of $\widetilde{G_0}$. Since $sy$ is not on a triangle of $\widetilde{G_0}$, it is not created from $H$ by replacing a vertex of $H$ with a triangle nor is it on a diamond in $\widetilde{G_0}$. Therefore, we may associate $sy$ with an edge $e$ in $H$ that may or may not be replaced with a string of diamonds. By Theorem~\ref{thm:multigaph-1factor} there exists a $2$-factor of $H$ containing $e$ meaning that when we create $\widetilde{G_0}$ from $H$ as in the proof of Theorem~\ref{thm:2connected}, $sy$ is either of the form $y_j^ix_{j+1}^i$ in which case $s$ and $y$ receive colors from $\{1_a, 1_b\}$, or $sy$ is incident to at least one diamond in a string of diamonds of Type 1 that replaces $e$. Recall that by the canonical coloring (from the proof of Theorem~\ref{thm:2connected}), also in this case $s$ and $y$ are given colors from $\{1_a, 1_b\}$. Without loss of generality,  we may assume $s$ receives $1_a$ and $y$ receives $1_b$. We then color each vertex of $G_0$ such that vertices of $G_0$ that are in $\widetilde{G_0}$ receive the same colors as assigned in the described coloring of $\widetilde{G_0}$, $u$ receives color $1_b$, $w$ receives color $1_a$, and $v$ receives color $2_a$. Therefore, $G_0$ is assigned a $(1, 1, 2, 2)$-coloring, where $v$ received color $2_a$.

Now we keep working our way down the tree by coloring the vertices of the subgraphs $G_i$ in the BFS order with respect to $T_G$. That is, we will next assign colors to the vertices of $G_i$ for the unique $G_i$ whose vertex in $T_G$ is in $A_1$. Then we proceed to assign colors to the vertices of $G_i$ for each $G_i$ whose vertex in $T_G$ is in $A_2$, etc. For each $G_i$ which is a $K_3$ or a diamond, we let $x_1^i$ be the only vertex that has an up-neighbor. If $G_i = K_3$,  we let $u_i$ and $v_i$ be the other remaining vertices and we immediately assign $u_i$ the color $1_a$ and $v_i$ the color $1_b$. (Due to this, $x_1^i$ will be assigned a color in $\{2_a,2_b\}$.) Similarly,  if $G_i$ is a diamond, where $u_i$ and $v_i$ are the interior vertices of the diamond, then we immediately assign $u_i$ the color $1_a$ and $v_i$ the color $1_b$. (Hence, the colors of $x_1^i$ and $x_2^i$ will be different colors from the set $\{2_a,2_b\}$.)

For each $G_i$ of Type III, where $|V(G_i)|\ge 5$, we let $X_i = \{x_1^i, x_2^i, \dots, x_{r_i}^i\}$ be all the vertices in $G_i$ of degree $2$ such that $x_1^i$ is the only vertex that has an up-neighbor. In order to assign colors to the vertices of $G_i$, we will create a $2$-edge-connected, claw-free cubic graph $\widetilde{G_i}$ in one of two ways. Since $G_i$ has order at least $5$ and $G$ is claw-free, it follows  that the set $X_i$ is independent in $G$.  First, if $r_i$ is even, then we add the edges $x^i_jx^i_{j+1}$ for $1 \le j \le r_i-1$ where $j$ is odd. It is clear that $\widetilde{G_i}$ is cubic and we know $\widetilde{G_i}$ is $2$-edged-connected as $G_i$ is $2$-edge-connected. To see that $\widetilde{G_i}$ is claw-free, note that any vertex $v$ in $G_i$ with degree $3$ was already assumed to not be the center of a claw in $G$ and any vertex $v$ of degree $2$ in $G_i$ is incident to a bridge so the two neighbors of $v$ in $G_i$ must be adjacent. Therefore, $\widetilde{G_i}$ is indeed claw-free.

 Next, if $r_i$ is odd, then first we let the two neighbors of $x_1^i$ in $G_i$ be $u_i$ and $w_i$. As in the case when we considered $G_0$, note that $u_i$ and $w_i$ are adjacent in $G$ as $G$ is claw-free. Let $s_i$ be the neighbor of $u_i$ different from $x_1^i$ and $w_i$, and let $y_i$ be the neighbor of $w_i$ different from $u_i$ and $x_1^i$. We may assume that $s_i\ne y_i$ for otherwise $G_i$ is a diamond which implies $r_i=2$, which is a contradiction. Note that we may assume $s_i$ and $y_i$ are not adjacent in $G$ for otherwise $s_i$ and $y_i$ have a common neighbor $z_i$ as $G$ is claw-free. However, this implies $z_i$ has degree $2$ implying $r_i=2$, which cannot be, or $z_i$ is incident to a bridge, another contradiction. Thus, $s_i$ and $y_i$ are not adjacent in $G$. We let $\widetilde{G_i}$ be the graph obtained from $G_i - \{x_1^i, u_i, w_i\}$ by adding the edge $s_iy_i$ and every edge of the form $x_j^ix_{j+1}^i$ where $2 \le j \le r_i-1$ where $j$ is even. It is clear that $\widetilde{G_i}$ is cubic and $2$-edge-connected as $G_i$ is assumed to be $2$-edge-connected.  To see that $\widetilde{G_i}$ is claw-free, note that any vertex $v$ in $G_i$ with degree $3$ was already assumed to not be the center of a claw in $G$ and any vertex $v$ of degree $2$ in $G_i$ is incident to a bridge so the two neighbors of $v$ in $G_i$ must be adjacent. Therefore, $\widetilde{G_i}$ is indeed claw-free.

 \vskip2mm
 \noindent\textbf{Claim 1} For each $1 \le i \le b$ where $G_i$ is of Type III, the up-neighbor of $x_1^i$ in $G_j$ is not on a diamond in $\widetilde{G_j}$ unless $G_j$ is itself a diamond.
 \vskip2mm

 \noindent\textit{Proof} Suppose to the contrary that the up-neighbor $w$ of $x_1^i$ is in $G_j$, where $G_j$ is not a diamond, and yet $w$ is on a diamond $D$ in $\widetilde{G_j}$. Suppose first that $j=0$. It follows that $w$ is an exterior vertex of $D$ as $w$ is the only vertex of degree $2$ in $G_0$. Let $u, v$, and $z$ be the remaining vertices on $D$ where $z$ is an exterior vertex of $D$. As $z$ has degree $3$ in $G_0$, $z$ has a neighbor $z'$ in $G_0$ different from $u$ and $v$. This implies that $zz'$ is a bridge in $G_0$, contradicting the fact that $G_0$ is $2$-edge-connected. Thus, we may assume that $j\ne 0$. Since $G_j$ corresponds to an interior vertex of $T_G$, we know $\{x_1^j, \dots, x_{r_j}^j\}$ contains at least two vertices.  Without loss of generality, assume $w = x_{r_j}^j$ and assume $x_{r_j}^j$ is adjacent to $x_{r_j-1}^j$ in $\widetilde{G_j}$.

 Suppose first that $x_{r_j}^j$ is an interior vertex of the diamond $D$. It follows that $x_{r_j-1}^j$ is also on $D$ as only two neighbors of $x_{r_j}^j$ in $G$ are in $G_j$. Let $u$ and $v$ be the other two vertices on $D$.
If $x_{r_j-1}^j$ is the other interior vertex of $D$, then $\{x_1^i, u, v\}$ is an independent set contradicting the fact that $G$ is claw-free. On the other hand, if $x_{r_j-1}^j$ is an exterior vertex of $D$ and $u$ is the other interior vertex of $D$, then $N_G[x_{r_j-1}^j]$ induces a claw, another contradiction. Therefore, we may assume that $x_{r_j}^j$ is an exterior vertex of the diamond. Moreover, by interchanging the roles of $x_{r_j}^j$ and $x_{r_j-1}^j$ in the above, we may assume that $x_{r_j-1}^j$ is not on the diamond.  Indeed, if $x_{r_j-1}^j$ is on $D$, then it must be an interior vertex of $D$, and in the same way as above we conclude that $N_G[x_{r_j}^j]$ induces a claw which is impossible.
 Label the remaining vertices of $D$ as $u, v$, and $z$ where $z$ is the other exterior vertex of the diamond. Then $z$ is incident to a bridge in $G_j$ as there does not exist a path from $u$ to any vertex in $V(G) - \{u, v, z, x_{r_j}^j\}$ that does not contain either $x_{r_j}^j$ or $z$. However, this is a contradiction since $G_j$ is assumed to be $2$-edge-connected. Therefore, this case cannot occur either and the claim is proved.
 \smallqed
 \vskip2mm

Now, we are in a position to present the $(1,1,2,2)$-coloring of $G_i$ under the assumption that the vertices of $G_j$, where $g_j$ is the parent of $g_i$ in $T_G$, have already been colored. We first deal with the unique vertex $x_1^i$ of $G_i$ having an up-neighbor, and prove that it can receive a color in $\{2_a,2_b\}$.

First, consider two small cases of $G_j$. If $G_j = K_3$, then $x_1^j$ has been assigned color $2_a$ or $2_b$, and its neighbors received colors from $\{1_a,1_b\}$, which implies that $x_1^i$ can receive a color from $\{2_a,2_b\}$. Similarly, if $G_j$ is a diamond, then its interior vertices were assigned colors $1_a$ and $1_b$, while $x_2^j$ was assigned color $2_a$ or $2_b$, again implying that $x_1^i$ can receive a color from $\{2_a,2_b\}$. In the next claim, we may thus assume that $G_j$ is of Type III and $|V(G_j)|\ge 5$.

\vskip2mm
\noindent\textbf{Claim 2} In a canonical $(1,1,2,2)$-coloring of $\widetilde{G_j}$, where $|V(G_j)|\ge 5$, there is a color from $\{2_a,2_b\}$ such that no vertex in $N_{G_j}[x_t^j]$, where $t\in\{2,\ldots, r_j\}$, receives that color.
\vskip2mm
\noindent\textit{Proof}
Clearly, since $|V(G_j)|\ge 5$, we infer that $G_j$ is of Type III. Let $w \in \{x_2^j, \dots, x_{r_j}^j\}$  be the neighbor of $x_1^i$ in $G_j$, and without loss of generality, we may assume that $w = x_{\ell}^j$. Note first that if $w \in V(G_0)$, then we have already assigned $w$ the color $2_a$, and its neighbors received colors from $\{1_a,1_b\}$, so the claim holds.

Thus, let $j>0$, and consider a canonical $(1,1,2,2)$-coloring of $\widetilde{G_j}$. We may assume that $x_{\ell}^j$ is adjacent to $x_k^j$   in $\widetilde{G_j}$.
Suppose first that $x_{\ell}^j$ is assigned either color $1_a$ or $1_b$.  Then by Observation~\ref{obs:2} and Claim 1, $x_{\ell}^j$ has two neighbors in $\widetilde{G_j}$ both of which received color $1_a$ or both received color $1_b$, and exactly one neighbor, say $z$, received a color in $\{2_a,2_b\}$.  Thus the claim holds in this case.

Next, suppose that $x_{\ell}^j$ is assigned either color $2_a$ or $2_b$. We may assume without loss of generality that it is assigned the color $2_a$. If $x_k^j$ is assigned the color $2_b$, then the other two neighbors of $x_{\ell}^j$ in $\widetilde{G_j}$ receive colors in $\{1_a,1_b\}$. However, since $x_k^j\notin N_{G_j}[x_\ell^j]$, again the claim holds.

So we shall assume that $x_k^j$ is assigned either color $1_a$ or $1_b$. Let $u$ and $v$ be the other neighbors of $x_{\ell}^j$ in $G_j$. Recall that $u$ and $v$ must be adjacent, since otherwise $G$ would have a claw with $x_{\ell}^j$ as its center.  We may assume, without loss of generality, that $v$ is assigned the color $2_b$ (for if $u$ and $v$ both received colors in $\{1_a,1_b\}$, the claim already holds). Note that in a canonical coloring, the only edges $wz$ where $w$ is assigned the color $2_a$ and $z$ the color $2_b$ are of the following types. Either
\begin{enumerate}
       \item[(a)] $w$ and $z$ are interior vertices of a diamond on a Type 1 string of diamonds that replaced an edge in a $2$-factor of $H$, or
    \item[(b)] $wz$ is incident to two different diamonds in a Type 2 string of diamonds that replaced an edge in a $2$-factor of $H$, or
    \item[(c)] $w = h_m$ and $z$ is either $h_n$ or an exterior vertex of a diamond on a Type 2 string of diamonds that replaced the edge $h_mh_n$.
\end{enumerate}
   Since $G_j$ is not a diamond, we infer by Claim 1 that $x_{\ell}^j$ does not lie on a diamond in $\widetilde{G_j}$, and so case (a) does not happen for $x_{\ell}^j$ and $v$. In either of the cases (b) and (c), the edge $wz$ does not lie on a triangle, however, $vx_\ell^j$ lies on the
   triangle $uvx_\ell^j$ in $\widetilde{G_j}$. Since none of the above cases is possible, we infer that $u$ and $v$ both received colors in $\{1_a,1_b\}$, and so the statement of the claim follows.
 \smallqed

\bigskip
Note that by Claim 2 and by the arguments preceding Claim 2, where $G_j=K_3$ or $G_j$ is a diamond, there is a color from $\{2_a,2_b\}$ such that all vertices in $G_j$ that are at distance at most $2$ from $x_1^i$, are not assigned that color. Therefore this color can and will be assigned to the vertex $x_1^i$. Without loss of generality, we may assume that this color is $2_a$.

\vskip2mm
\noindent\textbf{Claim 3} There exists a $(1,1,2,2)$-coloring of $G_i$ so that $x_1^i$ is assigned the color $2_a$.
\vskip2mm
\noindent\textit{Proof} If $G_i = K_3$, then certainly this is true as we have already assigned $u_i$ the color $1_a$ and $v_i$ the color $1_b$. If $G_i$ is a diamond, then $x_1^i$ and $x_2^i$ are the exterior vertices and we have already assigned the interior vertices $u_i$ and $v_i$ colors $1_a$ and $1_b$ respectively. In addition, by the assumption preceding this claim, $x_2^i$ receives color $2_b$, while $x_1^i$ receives color $2_a$.

Thus, we shall assume that $G_i$ is of Type III and order at least $5$. Suppose first that $r_i$ is even. We claim  that $x_1^ix_2^i$ is not on a triangle in $\widetilde{G_i}$. Indeed, suppose that $v$ is a common neighbor of $x_1^i$ and $x_2^i$ in $\widetilde{G_i}$, and let $N_{\widetilde{G_i}}(x_1^i)=\{x_2^i,v,a\}$, and $N_{\widetilde{G_i}}(x_2^i)=\{   x_1^i,v,b\}$. Since $G$ is claw-free, both $a$ and $b$ must be adjacent to $v$. Since $v$ has only three neighbors, we infer that $a=b$. That is, in $\widetilde{G_i}$, vertices $a$ and $v$ are common neighbors of $x_1^i$ and $x_2^i$, and are adjacent, which implies $\widetilde{G_i}=K_4$, a contradiction with  $|V(G_i)|\ge5$.

Let $H$ be the underlying multigraph of $\widetilde{G_i}$. Since $x_1^ix_2^i$ is not on a triangle, we can associate $x_1^ix_2^i$ with an edge $h_mh_n \in E(H)$ that may or may not be replaced with a string of diamonds to create $\widetilde{G_i}$. Moreover, by Theorem~\ref{thm:multigaph-1factor}, there exists a perfect matching of $H$ that contains $h_mh_n$ implying that we can create a canonical $(1, 1, 2, 2)$-coloring of $\widetilde{G_i}$ where $x_1^i$ receives color $2_a$ and $x_2^i$ receives color $2_b$. Now, for each vertex $v$ in $G_i$  we assign $v$ the same color as $v$ was assigned in $\widetilde{G_i}$. Since $x_2^i$ is not adjacent to $x_1^i$ in $G$, the distance in $G$ from $x_2^i$ to vertices in $G_j$ is greater than $2$. In addition, the distance in $G$ between every vertex in $V(G_i)\setminus\{x_1^i\}$ colored by color $2_a$ or $2_b$ to vertices in $G_j$ is greater than $2$. This proves the statement of the claim in the case $r_i$ is even.

Finally, assume that $r_i$ is odd. By using the same notation and  the same argument as in the paragraph preceding Claim 1, we let the two neighbors of $x_1^i$ in $G_i$ be $u_i$ and $w_i$, and we note that $u_i$ and $w_i$ are adjacent in $G$. Furthermore, $s_i$ is the neighbor of $u_i$ different from $x_1^i$ and $w_i$, and $y_i$ is the neighbor of $w_i$ different from $u_i$ and $x_1^i$. Recall that $s_i\ne y_i$, and $s_i$ and $y_i$ are not adjacent in $G$. Now, in $\widetilde{G_i}$, $s_i$ and $y_i$ are made adjacent. We claim that $s_iy_i$ is not on a triangle in $\widetilde{G_i}$ unless $\widetilde{G_i} = K_4$. Indeed, suppose that $s_iy_i$ lies on a triangle in $\widetilde{G_i}$, say $as_iy_i$.  Let $b$ (resp. $c$) be the other neighbor of $s_i$ (resp. $y_i$) in $\widetilde{G_i}$.  Since neither $s_i$ nor $y_i$ is the center of a claw in $G$, we see that $ab$ and $ac$ are both edges in $G_i$.  This implies that $b=c$ since $a$ has degree $3$, and so $\widetilde{G_i} = K_4$. (In particular, we also infer that $r_i=1$, and $G_i$ corresponds to a leaf of $T_G$.)  In this case, we assign color $2_a$ to $x_1^i$ and to $a$, color $2_b$ to $b$, color $1_b$ to $u_i$ and $y_i$, and color $1_a$ to $w_i$ and $s_i$. Thus, we may now assume that  $s_iy_i$ is not on a triangle in $\widetilde{G_i}$.

Let $H$ be the underlying multigraph of $\widetilde{G_i}$. It follows that we can associate $s_iy_i$ with an edge $h_mh_n \in E(H)$ that may or may not be replaced with a string of diamonds to create $\widetilde{G_i}$. By Theorem~\ref{thm:multigaph-1factor}, there exists a $2$-factor of $H$ containing $h_mh_n$. Thus, in $\widetilde{G_i}$ the edge $s_iy_i$ is either of the form $y_j^ix_{j+1}^i$ in which case $s_i$ and $y_i$ receive different colors from $\{1_a,1_b\}$, or $s_iy_i$ is incident to at least one diamond in a Type 1 string of diamonds that replaces $h_mh_n$, and again $s_i$ and $y_i$ receive different colors from $\{1_a,1_b\}$. Without loss of generality, we may assume $s_i$ receives $1_a$ and $y_i$ receives $1_b$. Now, we assign to each vertex in $G_i - \{x_1^i, u_i, w_i\}$ the same color as it was assigned in $\widetilde{G_i}$, we assign $u_i$ the color $1_b$, $w_i$ the color $1_a$, and $x_1^i$ the color $2_a$.
\smallqed

\medskip

For an arbitrary non-leaf vertex $g_j\in V(T_G)$ with $g_i$ as its child, where $g_j\in A_k$, the constructions in Claim 3 provide an extension of the $(1,1,2,2)$-coloring of the subgraph of $G$ consisting of all $G_m$, where $g_m$ belongs to $A_0\cup\cdots\cup A_k$, to the subgraph $G_i$. (The constructions are dealing with four possible cases, notably $G_i$ is a triangle, a diamond or a claw-free, 2-edge-connected, subcubic graph with either an odd or an even number of vertices of degree 2 and all other vertices of degree $3$.) By the described $(1,1,2,2)$-coloring of the vertices of $G_0$, and the mentioned extension of the $(1,1,2,2)$-coloring, we derive that $G$ admits a $(1,1,2,2)$-coloring.
\end{proof}

Theorem~\ref{thm:main} provides a construction of a $(1,1,2,2)$-coloring of a claw-free cubic graph. While this is a large class of subcubic graphs, the problem whether all subcubic graphs are $(1,1,2,2)$-colorable remains an open challenge.

\section*{Acknowledgements}
B.B. acknowledges the financial support of the Slovenian Research and Innovation Agency (research core funding No.\ P1-0297 and projects N1-0285, J1-3002, and J1-4008). K.K. would like to thank the financial support of the FRC at Trinity College. Additionally, support for this research was provided by an AMS-Simons Research Enhancement Grant for Primarily Undergraduate Institution Faculty.

\end{document}